\definecolor{NoteColor}{rgb}{1,0,0}
\renewcommand{\textsc}{\textcolor{red}}
\newtheorem{theorem}{\rm\bf Theorem}[section]
\newtheorem{proposition}[theorem]{\rm\bf Proposition}
\newtheorem{corollary}[theorem]{\rm\bf Corollary}
\newtheorem*{theorem 1}{\rm\bf Proposition 1}
\newtheorem*{theorem 2}{\rm\bf Proposition 2}
\theoremstyle{definition}
\newtheorem{definition}[theorem]{\rm\bf Definition}
\theoremstyle{remark}
\newtheorem{remark}[theorem]{\rm\bf Remark}
\newtheorem{example}[theorem]{\rm\bf Example}
\newcommand{\del}{\partial}
\def\interieur#1{\mathord{\mathop{\kern 0pt #1}\limits^\circ}}
\title[Funk and Hilbert geometry]{The Funk and Hilbert geometries for spaces of constant curvature}
\author{Athanase Papadopoulos}
\address{Athanase Papadopoulos,  Institut de Recherche Math\'ematique Avanc\'ee,
Universit{\'e} de Strasbourg and CNRS,
7 rue Ren\'e Descartes,
 67084 Strasbourg Cedex, France.} 
 \email{athanase.papadopoulos@math.unistra.fr}
\author{Sumio Yamada}
\address{Sumio Yamada, Mathematical Institute, Tohoku University, Sendai, 980-8578, Japan}
\email{yamada@math.tohoku.ac.jp}
\date{\today}
\begin{document}

\begin{abstract}

 The goal of this paper is to introduce and study analogues of the 
Euclidean Funk and Hilbert metrics on open convex subsets $\Omega$ of hyperbolic or spherical spaces. At least at a formal level, there are striking similarities among the three cases: Euclidean, spherical and hyperbolic. We start by defining non-Euclidean analogues of the Euclidean Funk weak metric and we give three distinct representations of it in each of the non-Euclidean cases, which parallel the known situation for the Euclidean case. As a consequence, all of these metrics are shown to be  Finslerian, and  the associated norms of the Finsler metrics are described. The theory is developed by using a set of classical trigonometric identities on the sphere $S^n$ and the hyperbolic space $\mathbb{H}^n$ and the definition of a cross ratio on the non-Euclidean spaces of constant curvature.  This in turn leads to the concept of projectivity invariance in these spaces. We then study the geodesics of the Funk and Hilbert metrics. In the case of Euclidean (respectively spherical, hyperbolic) geometry, the Euclidean (respectively spherical, hyperbolic) geodesics are Funk and Hilbert geodesics. Natural projection maps that exist between the spaces $\mathbb{R}^n$, $\mathbb{H}^n$ and the upper hemisphere demonstrate that the theories of Hilbert geometry of convex sets in the three spaces of constant curvature are all equivalent. The same cannot be said   about the Funk geometries.

\bigskip
\noindent AMS Mathematics Subject Classification:    Primary  58B20 ; Secondary 53C60, 51A05,  51A05, 53A35.

\bigskip 

\noindent Keywords: Hilbert metric, Funk metric, constant curvature, non-Euclidean trigonometry,  cross ratio, non-Euclidean geometry, projective geometry, Minkowski space, generalized Beltrami-Klein model. 
 \end{abstract}
\maketitle

\thanks{Second author supported in part by JSPS Grant-in-aid for Scientific Research No.24340009}

\section{Introduction}

Given a bounded open convex subset $\Omega$ of a Euclidean space, D. Hilbert  proposed in \cite{H} (1895) a natural metric $H_\Omega(x, y)= H(x,y)$ on $\Omega$, now called the \emph{Hilbert metric}. It is defined for $x\not= y$ in $\Omega$ as the logarithm of the cross ratio of the quadruple $(x, y, b(x, y), b(y,x))$, where $b(x, y)$ is the point where the ray from $x$ through $y$ hits the boundary of $\Omega$.  This defines a 
metric on $\Omega$, which is Finslerian and projective.  We recall that a Finslerian metric on $\Omega$ (or, more generally, on a differentiable manifold) is determined by a norm on each tangent space in such a way that  the distance between two point in $\Omega$ is equal to the infimum of lengths of paths joining them,  where the length of a path is computed by integrating  along it the norms of vectors tangent to this path. This 
norm function is usually called the \emph{Minkowski functional} of the metric. We also recall that a metric on a subset of Euclidean space is said to be 
projective if Euclidean straight lines are geodesics for that metric.

  The open unit 
disc in $\mathbb{R}^n$ equipped with its Hilbert metric $H(x, y)$ is a prominent example of a projective metric, since it is
Klein's model of hyperbolic $n$-space, and it was the motivating example for Hilbert when he defined his metric for more general convex sets.

The value $H(x, y)$ of the Hilbert metric  on $\Omega$ can be written, for $x\not= y$,  as 
\[
\log \frac{|x-b(x,y)||y-b(y,x)|}{|y-b(x,y)||x-b(y,x)|} =  \log \frac{|x-b(x,y)|}{|y-b(x,y)|}
+ \log \frac{|y-b(y,x)|}{|x-b(y,x)|}.
\]

P. Funk~\cite{F} looked at the first term of the right hand side of the above equation as a kind of metric, even though it is not symmetric in $x$ and $y$. This is now called the \emph{Funk metric}.  The reader is referred to Funk's paper \cite{F} and to the papers \cite{PT1, PT2} for some historical and technical background on the Funk metric.

More generally, given a set $X$, we can consider
 functions $\delta: X \times X \rightarrow \mathbb{R}_+ \cup \{ \infty \} $ satisfying the following two properties:
\begin{enumerate}
\item $\delta(x,x) = 0$ for all $x$ in $X$ ;
\item $\delta(x, z) \leq \delta(x, y) + \delta(y, z)$ for all $x,y$ and $z$ in $X$.
\end{enumerate}

In \cite{PT1}, such a function $\delta$ is named \emph{weak metric}. Note that in this definition  neither the symmetry $\delta(x, y) = \delta(y,x)$ nor the nondegeneracy $\big(\delta(x, y) = 0 \Rightarrow x=y\big)$ are assumed. A weak metric may be Finslerian in the sense that it is induced by a \emph{weak norm} on each tangent space in the same way as a genuine metric can be Finslerian. Here, a \emph{weak norm} on a vector space is a function $\xi\mapsto \Vert \xi\Vert$ satisfying
 \begin{enumerate}
\item $\Vert 0\Vert =0$ ;
\item  $\xi\mapsto \Vert \xi\Vert$ is convex.
\end{enumerate}
 
 For brevity, we shall call a weak metric a \emph{metric}, and when such a function does or does not satisfy the other axioms satisfied by a metric, we shall mention it explicitly whenever this is needed.

We recall that Hilbert's Fourth Problem from the collection of mathematical problems he presented in 1900 at the Second International Congress of Mathematicians in Paris, is entitled: ``Problem of the straight line as the shortest distance between two points."  Hilbert elaborates on this statement in \cite{Hilbert-Problems}, and he also places the problem in a historical perspective. Mathematicians however agree on the fact that Hilbert's formulation of the problem is rather vague, which makes this problem (like several others in Hilbert's collection), admit several different precise formulations and therefore several possible solutions. A possible (and probably the most common) formulation of that problem is in two parts, as follows: (1) to characterize the metrics on subsets of Euclidean space for which the Euclidean straight lines are geodesics, and (2) to study such metrics individually. (We note by the way that the axioms of a metric space as we intend it today had not been formulated at the time Hilbert proposed his problems; they were given later on by Maurice Fr\'echet in his thesis (1907).)

Herbert Busemann, who spent much of his career  engaged with that problem, gave it the following formulation (see \cite{Busemann1974}), which is close to the one we gave above: ``The fourth problem concerns the geometries in which the ordinary lines, i.e. lines of an n-dimensional (real) projective space $\mathbb{P}^n$ or pieces of them are the shortest curves or geodesics. Specifically, Hilbert asks for the construction of these metrics and the study of the individual geometries."  

For the Funk and the Hilbert metrics in hyperbolic and in spherical geometry, we shall see the non-Euclidean geodesics are geodesics for these metrics. It is then natural to address the question of considering Hilbert Problem IV in these non-Euclidean settings, that is, the problem of characterizing and of studying individually the metrics on subsets of the sphere and on subsets of hyperbolic space for which the spherical and the hyperbolic straight lines respectively are geodesics. We shall see that these problems amount to the Euclidean problem. A related fact is that despite of the lack of linear structure in the hyperbolic and spherical geometries, when it comes to the geometry of Hilbert metrics, one can still capture the projective geometry and the so-called incidence geometry in a manner almost identical to the one in the Euclidean situation, from the viewpoint of convex geometry.
We further remark that the projective structure only appears in the geometry of Hilbert metrics, and not in that of Funk metrics.  

The authors would like to thank Norbert A'Campo for sharing his enthusiasm and ideas.

\section{The Funk metric in Euclidean space} \label{s:Funk}
First we collect some known facts about the Funk and Hilbert metrics defined on convex subsets of Euclidean spaces.   We set the presentation in \cite{PT1} as our reference for the Funk 
and Hilbert metrics, and we also refer to the first part of the paper \cite{Y3}.

Let  $\Omega$ be an open convex subset of Euclidean space $\mathbb{R}^n$. Note that we allow $\Omega$ to be unbounded.

There are three different descriptions of the Funk metric.  
The first one is the original definition which we already referred to:
\[
F_1(x, y) = \log \frac{d(x, b(x,y))}{d(y,  b(x,y))},
\]
 for $x\not= y$ in $\Omega$, the point $b(x,y)$ being the intersection point of the Euclidean ray $\{x+t \xi_{xy}: t>0\}$ 
from $x$ though $y$ with the boundary $\del \Omega$ when such an intersection point exists, $\xi_{xy}$ being the unit tangent vector in $\mathbb{R}^n$ pointing from $x$ to $y$. In the case where the Euclidean ray $\{x+t \xi_{xy}: t>0\}$ is contained in $\Omega$ (and therefore does not intersect the boundary), we set the distance $F_1(x, y)$ to be 0. This makes the function $(x,y)\mapsto F_1(x,y)$ defined on the whole set $\Omega\times \Omega$ and continuous with respect to the topology of $\mathbb{R}^n$.

The second description is the variational interpretation of the above value  using the geometry of supporting hyperplanes; we set:
\[
F_2 (x,y) =  \sup_{\pi \in {\mathcal{P}}} \log \frac{d(x, \pi)}{d(y,  \pi)},
\]
where $\mathcal{P}$ is the set of all supporting hyperplanes of $\Omega$. This is given in \cite{Y3}. In  the literature, a variational characterization of Hilbert metrics appears in the work of Nussbaum (\cite{Nu}, 1988), where the supporting hyperplanes are treated as the elements of the dual space; note that the Funk metric was not mentioned in that work.

Finally,  the Finsler structure $p_{\Omega, x}(\xi)$ of the Funk metric is given by the following function (the Minkowski functional) on vectors $\xi$ at each tangent space to $\Omega$ at $x$:
\[
p_{\Omega, x}(\xi) = \sup_{\pi \in {\mathcal{P}}} \frac{ \langle \nu_{\pi}(x), \xi \rangle}{d(x, \pi)}.
\]   
This is a weak norm on each tangent space which is defined so that the Funk distance is described as the infimum of lengths of 
curves:
\[
F_3 (x,y) = \inf_{\sigma} \int_a^b p_{\Omega, \sigma (t)} (\dot{\sigma}(t)) dt,
\]
 the infimum being taken over the piecewise $C^1$-curves $\sigma$ in $\Omega$ with $\sigma(a) = x$ and $\sigma(b) = y$. 

We emphasize that 
for a convex domain $\Omega \subset \mathbb{R}^n$, the three quantities $F_1(x,y)$, $F_2(x,y)$, $F_3(x,y)$ are  all equal to each other, and we set
\[
F(x, y) := F_1(x,y) = F_2(x,y)= F_3(x,y)
\]
for every $x$ and $y$ in $\Omega$. 

The aim of this paper is to consider Funk-like metrics in non-Euclidean geometries: hyperbolic and spherical. Formally, the exposition is very similar in the two cases. We shall first give an exposition of the theory in  the case of the $n$-dimensional hyperbolic space and we shall then mention the changes needed for the case of spherical geometry.
 
\section{The hyperbolic and spherical Funk metrics}

\subsection{The hyperbolic Funk metric}\label{s:Funk-H}

Given an open convex set $\Omega$ in ${\mathbb H}^n$, we shall define a 
Funk-type metric (which we shall call the \emph{Funk metric} of $\Omega$), and provide three descriptions of it 
corresponding to $F_1$, $F_2$ and $F_3$ of the Euclidean Funk 
metric case. 

We shall also study the geodesics of this metric. We recall that a path $s:[a,b] \rightarrow (X, d)$ in a metric space $(X, d)$ is said to be 
geodesic when for any $a<t<b$, the equality $d(s(a),s(t)) + d(s(t), s(b)) = d(s(a), s(b))$
is satisfied.

We start by representing the convex set $\Omega$ as 
$
\cap_{\pi(b) \in {\mathcal{P}}} H_{\pi (b)}
$
where $H_{\pi(b)}$ is the (open) half space bounded by a 
totally geodesic hyperplane touching $
\Omega$ at the boundary point $b$ and containing the convex set $
\Omega$. In analogy with 
the Euclidean situation, we call these submanifolds  {\it supporting hyperplanes} of $\Omega$.   
The index set $\mathcal{P}$ is the set of all supporting hyperplanes
of $\Omega$. That for every boundary point $p$ there exists a 
supporting hyperplane $\pi(b)$ follows from the 
definition of convexity of $\Omega$.  In general, there can be more than one supporting hyperplane
of $\Omega$ at $p \in \del \Omega$. 
For later use, we denote by ${\mathcal{P}}(b)$ the set of supporting hyperplanes at $b \in \del \Omega$.
We denote by $d$ the hyperbolic metric in $\mathbb{H}^n$.  \\

Given two distinct points $x$ and $y$ in a convex set $\Omega$, we denote by $R(x,y)=\{ \exp_x (t \xi_{xy}) \,\, | \,\, 
t>0\}$ the geodesic ray starting at $x$ and passing through $y$ and where, as in the Euclidean case, $\xi_{xy}$ is the unit tangent 
vector at $x$ of the arc-length parameterized geodesic in $\mathbb{H}^n$ connecting $x$ and $y$. 

\begin{definition} \label{d:Funk-h}
For a pair of points $x$ and $y$ in $\Omega \subset {\mathbb H}^n$, 
 the Funk (asymmetric) distance from $x$ to $y$ is defined by 
\begin{equation*}
F(x, y)= 
\begin{cases}  \displaystyle \log \frac{\sinh d(x, b(x,y))}{\sinh d(y,  b(x,y))} & \text{if $x\not=y$ and $R(x,y)\cap \partial \Omega \not=\emptyset$,}
\\
0 &  \text{otherwise}
\end{cases}
\end{equation*}
where the point $b(x,y)$ is the intersection with the boundary $\del 
\Omega$ of the hyperbolic geodesic ray $R(x,y)$ from $x$ though $y$. 
\end{definition}
  
We shall see below that the function $F$ satisfies indeed the triangle inequality.

We will consider only the case where the ray $R(x,y)$ is not contained in $\Omega$. The other case can be dealt with easily.

We first recall a classical trigonometric identity on the hyperbolic plane ${\mathbb H}^2$. For a given hyperbolic right triangle $\triangle(A,B,C)$
with angles $\alpha, \beta,\gamma$ with $\gamma = \pi / 2$ and with side lengths $a,b$ and $c$  
opposite to the vertices $A, B$ and $C$ respectively, we have
\[
\sinh b = \sinh c \sin \beta.  
\]
The formula is a special case of the Sine Rule which is recalled in the Appendix.

Note that a Euclidean right triangle with corresponding labelling would satisfy $b = c \sin \beta$  and we have here an instance of a correspondence which often occurs between the Euclidean and the hyperbolic trigonometric formulae where the hyperbolic formulae are obtained by replacing the side lengths by the hyperbolic sines of these lengths, and similar transformations. (See as an example the Sine Rule in the Appendix.) Choosing a point $A'$ on the side  $c$ and letting  $C'$ be its nearest point projection on the side $a$, we have another right triangle $\triangle(A',B,C')$ with angles
$\alpha', \beta,\gamma' $, with $\gamma'= \pi/2$, and with side lengths $a',b,c'$  
opposite to the vertices $A', B$ and $C'$ respectively, satisfying 
\[
\sinh b' = \sinh c' \sin \beta.  
\]
As the ratios $\sinh b / \sinh c$ and $ \sinh b' / \sinh c' $ are equal to $\sin \beta$, we shall say that the two triangles $\triangle(A,B,C)$ and $\triangle(A',B,C')$ are {\it similar} with the side lengths being weighted by the $\sinh$ function.

\begin{figure}[!hbp]
 \centering
  \psfrag{B}{\small $B$}
   \psfrag{A}{\small $A$}
    \psfrag{C}{\small $C$} \psfrag{a}{\small $a$} \psfrag{b}{\small $b$} \psfrag{c}{\small $c$} \psfrag{D}{\small $A'$} \psfrag{E}{\small $E$} \psfrag{F}{\small $C'$} \psfrag{e}{\small $a'$} \psfrag{f}{\small $c'$} \psfrag{v}{\small $\beta$}
     \includegraphics[width=0.35\linewidth]{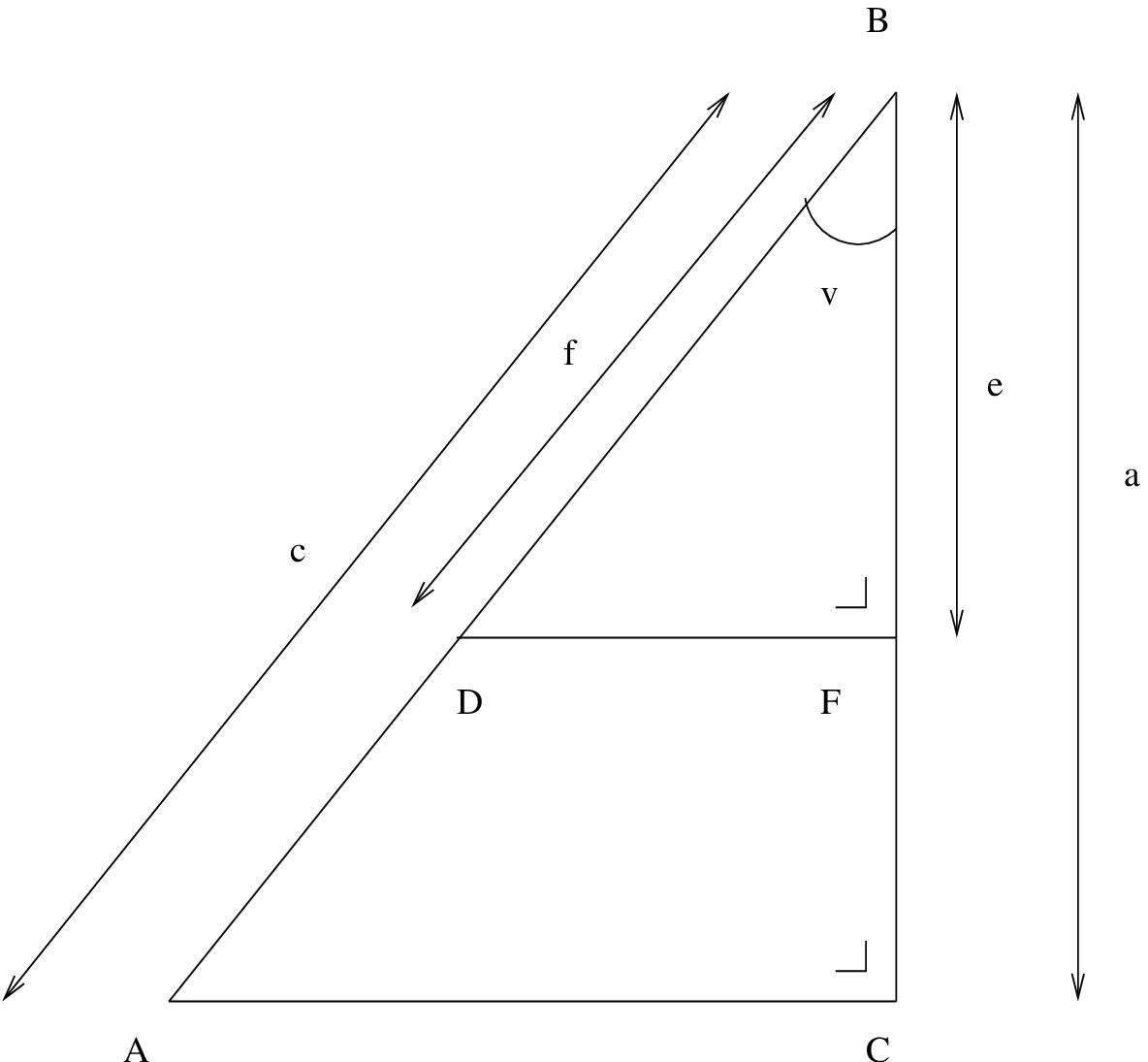} \caption{\small {}} \label{Funk1} \end{figure}

We now let $\pi_0$ be a supporting hyperplane for $\Omega$ at $b(x,y)$, namely, $\pi_0 \in {\mathcal{P}}(b(x,y))$.
We note the similarity (in the above sense) between the right triangles $\triangle (x, \Pi_{\pi_0}(x), b(x,y))$
and $\triangle (y, \Pi_{\pi_0}(y), b(x,y))$, where $\Pi_{\pi_0}(p)$ is the foot of the point $p$ on the hyperplane $\pi_0$, or, putting it differently, where $\Pi_{\pi_0} : \mathbb{H}^n \rightarrow \pi_0$ is the nearest point projection map. The two triangles $\triangle (x, \Pi_{\pi_0}(x), b(x,y))$ and $\triangle (y, \Pi_{\pi_0}(y), b(x,y))$ lie in a hyperbolic plane ${\mathbb H}^2$ isometrically embedded in ${\mathbb H}^n$ which is uniquely determined by the three points $x, \Pi_{\pi_0}(x), b(x,y)$ it contains.  

Thus, we have
\[
\log \frac{\sinh d(x, b(x,y))}{\sinh d(y,  b(x,y))} = \log \frac{\sinh d(x, \pi_0)}{\sinh d(y,  \pi_0)}.
\]
By the above similarity 
property of triangles, the right hand side of the equality is independent of the 
choice of $\pi_0$ in ${\mathcal{P}}(b(x, y))$.

Now by the convexity of $\Omega$, the quantity $F(x, y)$ in Definition \ref{d:Funk-h} can be characterized variationally as follows. 
Define $T(x,\xi, \pi)$ by $\pi \cap R(x,y)$ with $\pi \in {\mathcal{P}}$. 
Consider the case $\xi = \xi_{xy}$.  When the hyperplane $\pi$ supports $\Omega$ at $b(x, y)$, we have $T(x,\xi_{xy}, \pi) = b(x, y)$
and otherwise  (when $\pi \notin {\mathcal{P}}(b(x, y))$)  the point $T(x, \xi_{xy}, \pi)$ lies outside $\Omega$.  
When $\pi \notin {\mathcal{P}}(b(x,y))$,  by the similarity property between the triangles
$\triangle (x, F_{\pi}(x), T(x, \xi_{xy}, \pi))$ and $\triangle (y, F_{\pi}(y), T(\xi_{xy}, \pi))$ again, we have 
\[
\frac{\sinh d(x, \pi)}{\sinh d(y, \pi)} =\frac{\sinh d(x, T(x,\xi_{xy},\pi))}{\sinh d(y, T(x,\xi_{xy}, \pi))}.
\]
Note that the closest point to $x$ along the ray $\{ \exp_x t \xi_{xy} \,\, | \,\,  t\geq 0 \}$ of the form $T(x,\xi_{xy}, \pi)$ is $b(x, y)$.    
This in turn says that  a hyperplane $\pi$ which supports $\Omega$ at $b(x,y)$ maximizes the ratio $d(x, T(x,\xi_{xy}, 
\pi))/d(y, T(x,\xi_{xy}, \pi))$ among all the elements of ${\mathcal{P}}$; that is,

\[
 \log \frac{\sinh d(x, b(x,y))}{\sinh d(y,  b(x, y))} = \sup_{\pi \in {\mathcal{P}}} \log \frac{\sinh d(x, \pi)}{\sinh d(y,  \pi)}. 
\]

Hence we have the following characterization of the Funk metric $F$ for $\Omega \subset {\mathbb H}^n$:
\begin{theorem} The Funk metric on a convex subset $\Omega \subset {\mathbb H}^n$ has the following variational formulation:
\[
F(x, y) = \sup_{\pi \in {\mathcal{P}}} \log \frac{\sinh d(x, \pi)}{\sinh d(y,  \pi)}.
\]    
\end{theorem}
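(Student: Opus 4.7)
The plan is to prove both inequalities separately: that the supremum is attained at a hyperplane supporting $\Omega$ at the exit point $b(x,y)$, and that no other supporting hyperplane exceeds this value.

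First I would reduce the situation to a planar one by noting that for any $\pi \in \mathcal{P}$ which meets the geodesic ray $R(x,y)$ at a point $T$, the four points $x, y, \Pi_\pi(x), \Pi_\pi(y)$ all lie in the totally geodesic $\mathbb{H}^2$ containing $x$, $y$, and the perpendicular from $x$ to $\pi$ (the point $T$ lies on the geodesic through $x,y$, so it lies in this plane automatically). Inside this $\mathbb{H}^2$, the triangles $\triangle(x, \Pi_\pi(x), T)$ and $\triangle(y, \Pi_\pi(y), T)$ are right-angled at $\Pi_\pi(\cdot)$ and share the angle at $T$. The hyperbolic right-triangle identity $\sinh(\text{leg}) = \sinh(\text{hypotenuse}) \sin(\text{angle})$ then gives
\[
\frac{\sinh d(x, \pi)}{\sinh d(y, \pi)} = \frac{\sinh d(x, T)}{\sinh d(y, T)},
\]
so the ratio depends only on the intersection point $T$, not on the particular hyperplane through $T$. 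When $\pi$ supports $\Omega$ at $b(x,y)$ we have $T = b(x,y)$, so this exhibits $F(x,y)$ as a value that the supremum actually attains.

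Next, for a supporting hyperplane $\pi$ not containing $b(x,y)$, I would exploit convexity: since $\Omega \subset H_\pi$, the ray $R(x,y)$ stays in $H_\pi$ throughout the segment from $x$ to $b(x,y)$, so if the ray ever meets $\pi$ at a point $T$, then $T$ must lie on the extension beyond $b(x,y)$, giving $d(y, T) \geq d(y, b(x,y))$. Hyperplanes not meeting the extended ray at all would have to be handled separately, either by a limiting argument (approximating them by secant hyperplanes) or by observing that the relevant distances can only decrease along asymptotic directions, so such hyperplanes contribute ratios that tend to $1$ in the logarithm.

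The main obstacle, and the genuinely hyperbolic input, is the monotonicity step: one must verify that $s \mapsto \sinh(d(x,y) + s)/\sinh s$ is strictly decreasing on $(0,\infty)$. Writing $s = d(y,T)$ and applying the addition formula yields
\[
\frac{\sinh(d(x,y) + s)}{\sinh s} = \cosh d(x,y) + \sinh d(x,y)\, \coth s,
\]
and since $\coth$ is strictly decreasing on $(0, \infty)$, the ratio is maximized when $s$ is smallest, namely at $T = b(x,y)$. Combined with the equality $\sinh d(x,\pi)/\sinh d(y,\pi) = \sinh d(x,T)/\sinh d(y,T)$ from the triangle step, this gives the upper bound $\sinh d(x, b(x,y))/\sinh d(y, b(x,y))$ for every $\pi \in \mathcal{P}$. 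Taking logarithms and combining with the attainment step completes the proof. The Euclidean counterpart of this monotonicity is the trivial statement that $(d(x,y)+s)/s$ decreases in $s$; replacing lengths by their hyperbolic sines converts this into the $\coth$-monotonicity above, which is where the non-Euclidean geometry enters in a non-formal way.
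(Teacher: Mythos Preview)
Your argument is essentially the paper's: reduce to a totally geodesic $\mathbb{H}^2$, use the right-triangle identity $\sinh(\text{leg}) = \sinh(\text{hypotenuse})\sin(\text{angle})$ to rewrite the ratio $\sinh d(x,\pi)/\sinh d(y,\pi)$ as $\sinh d(x,T)/\sinh d(y,T)$ for $T = \pi \cap R(x,y)$, and then invoke convexity to place $T$ at or beyond $b(x,y)$. You are in fact more explicit than the paper on two points---the paper leaves the monotonicity of $s\mapsto\sinh(d(x,y)+s)/\sinh s$ unstated and silently assumes every supporting hyperplane meets the ray---but the overall strategy is identical.
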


With this formulation, one can readily see that $F(x, y)$ satisfies the triangle inequality, for
 \begin{eqnarray*}
F(x, y) + F(y, z)  &=&  \sup_{\pi \in {\mathcal{P}}} \log \frac{\sinh d(x, \pi)}{\sinh d(y, \pi)} +  \sup_{\pi \in {\mathcal{P}}} \log \frac{\sinh d(y, \pi)}{\sinh d(z, \pi)} \\
 & \geq &  \sup_{\pi \in {\mathcal{P}}} \Big( \log \frac{\sinh d(x, \pi)}{\sinh d(y, \pi)} +   \log \frac{\sinh d(y, \pi)}{\sinh d(z, \pi)} \Big) \\& =  & \sup_{\pi \in {\mathcal{P}}} \log \frac{\sinh d(x, \pi)}{\sinh d(z, \pi)} \\ & = & F(x, z)
\end{eqnarray*}

Note that the triangle inequality becomes an equality when 
\[
\sup_{\pi \in {\mathcal{P}}} \log \frac{\sinh d(x, \pi)}{\sinh d(y, \pi)} +  \sup_{\pi \in {\mathcal{P}}} \log \frac{\sinh d(y, \pi)}{\sinh d(z, \pi)} 
  =   \sup_{\pi \in {\mathcal{P}}} \Big( \log \frac{\sinh d(x, \pi)}{\sinh d(y, \pi)} +   \log \frac{\sinh d(y, \pi)}{\sinh d(z, \pi)} \Big)
\] 
is satisfied.  For this to occur, we only need to have ${\mathcal{P}}(b(x, y)) \cap {\mathcal{P}}(b(y,z)) \neq \emptyset$.  Indeed, let $\pi_0$ be an element of the set  ${\mathcal{P}}(b(x, y)) \cap {\mathcal{P}}(b(y,z)) \neq \emptyset$. Then 
the boundary points $b(x,y)$ and $b(y,z)$ share the same supporting hyperplane $\pi_0$, and therefore  
\[
\sup_{\pi \in {\mathcal{P}}} \log \frac{\sinh d(x, \pi)}{\sinh d(y,  \pi)}
= \log \frac{\sinh d(x, \pi_0)}{\sinh d(y, \pi_0)},\]
\[
\sup_{\pi \in {\mathcal{P}}} \log \frac{\sinh d(y, \pi)}{\sinh d(z,  \pi)}
= \log \frac{\sinh d(y, \pi_0)}{\sinh d(z, \pi_0)}
\]
and 
\[
\sup_{\pi \in {\mathcal{P}}} \log \frac{\sinh d(x, \pi)}{\sinh d(z,  \pi)}
= \log \frac{\sinh d(x, \pi_0)}{\sinh d(z, \pi_0)}
\]
inducing the equality. The observation is summarized as in the following proposition. For $x$ and $y$ in $\Omega$, we denote, as before, by $R(x)$ the geodesic ray starting at $x$ and passing through $y$.

\begin{proposition} Let $\Omega$ be an open convex subset of ${\mathbb  H}^n$ such that $\del \Omega$ contains some hyperbolic geodesic segment $\sigma:[p, q] \rightarrow {\mathbb H}^2$ which we denote by $\overline{pq}$ and let $x$ and $z$ be two distinct points in $\Omega$ such that $R(x,y) \cap \overline{pq} \neq \emptyset$.  Let $\Omega'$ be the intersection of $\Omega$ with the hyperbolic plane ${\mathbb H}^2$ in ${\mathbb H}^n$ containing $\{x\} \cup \overline{pq}$.  Then, for any point $y$ in $\Omega'$ satisfying $R(x,y)  \cap \overline{pq} \neq \emptyset$ and $R(y,z)  \cap \overline{pq} \neq \emptyset$, we have $F(x, y) + F(y,z) = F(x,z)$. 
\end{proposition}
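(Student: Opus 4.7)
The plan is to reduce the statement to the observation immediately preceding the proposition: the triangle inequality becomes an equality as soon as $\mathcal{P}(b(x,y)) \cap \mathcal{P}(b(y,z))$ contains at least one hyperplane. Thus I only need to produce a single totally geodesic hyperplane of $\mathbb{H}^n$ that supports $\Omega$ and passes through both boundary points $b(x,y)$ and $b(y,z)$. The natural candidate is the totally geodesic hyperplane $\pi_0$ extending the boundary geodesic segment $\overline{pq} \subset \partial \Omega$.

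First I would construct $\pi_0$. By convexity, $\Omega$ admits a supporting hyperplane at any chosen interior point $b$ of $\overline{pq}$, and I claim any such hyperplane contains the whole segment $\overline{pq}$. Indeed, a geodesic in $\mathbb{H}^n$ meeting a totally geodesic hyperplane either meets it transversally at a single point or is entirely contained in it; if $\overline{pq}$ crossed the supporting hyperplane transversally at $b$, a small arc of $\overline{pq}$ near $b$ would lie in the open half-space opposite to $\Omega$, contradicting $\overline{pq} \subset \overline{\Omega}$. Hence the supporting hyperplane $\pi_0$ contains $\overline{pq}$, and therefore $\pi_0 \in \mathcal{P}(b')$ for every $b' \in \overline{pq}$.

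Next, the hypotheses $R(x,y) \cap \overline{pq} \neq \emptyset$ and $R(y,z) \cap \overline{pq} \neq \emptyset$, together with the fact that a geodesic ray issuing from a point of the open convex set $\Omega$ meets $\partial \Omega$ in a unique exit point, force $b(x,y) \in \overline{pq}$ and $b(y,z) \in \overline{pq}$. Consequently $\pi_0 \in \mathcal{P}(b(x,y)) \cap \mathcal{P}(b(y,z))$, and the observation preceding the proposition yields $F(x,y) + F(y,z) = F(x,z)$. The only delicate point in the argument is the claim that a supporting hyperplane at an interior point of a boundary geodesic segment must contain the whole segment, which is the hyperbolic analogue of a standard Euclidean fact about convex sets; everything else is a routine bookkeeping of the variational formula once this is in place.
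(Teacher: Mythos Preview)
Your proposal is correct and follows the same approach as the paper. The paper presents the proposition explicitly as a summary of the observation preceding it (namely, that $\mathcal{P}(b(x,y)) \cap \mathcal{P}(b(y,z)) \neq \emptyset$ forces equality in the triangle inequality), and you have supplied precisely the details the paper leaves implicit: why the boundary geodesic segment $\overline{pq}$ admits a single supporting hyperplane $\pi_0$ containing it, and why the hypotheses force $b(x,y),\,b(y,z)\in\overline{pq}$.
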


A notable situation when one has ${\mathcal{P}}(b(x, y)) \cap {\mathcal{P}}(b(y,z)) \neq \emptyset$ is when
$x,y$ and $z$ are collinear, meaning that they lie on a common geodesic, with $y$ lying between $x$ and $z$.  This in turn says that
the hyperbolic geodesics are Funk geodesics, or -- as Hilbert would say --  that the Funk metric is projective. This result is a hyperbolic analogue of Corollary 8.2 of \cite{PT1}.

On the other hand, when $\pi_0$ is in the set  ${\mathcal{P}}(b(x, y)) \cap {\mathcal{P}}(b(y,z))$ and the three points $x,y,z$ do not lie on a geodesic,  the concatenation of the 
geodesic segments $\overline{xy}$ and $\overline{yz}$ is also a Funk geodesic, a situation occurring when 
the boundary set $\del \Omega$ contains a hyperbolic geodesic segment; a statement which is a hyperbolic analogue of Corollary 8.4 of \cite{PT1}.

We next consider the complementary situation where  ${\mathcal{P}}(b_1) \cap {\mathcal{P}}(b_2) = \emptyset$ for any 
pair of distinct points $b_i \in \del \Omega$. Geometrically this characterizes strict convexity of the domain $\Omega$, namely
the boundary $\del \Omega$ contains no closed line segment.  From the preceding argument, it follows that the only  
way the equality for the triangle inequality occurs is when the three points $x,y$ and $z$ are collinear. Hence,
for strictly convex domains, the Funk geodesics consist of line segments only. Equivalently, given a pair of distinct points in $\Omega$, there is a unique Funk geodesic connecting them. This corresponds to Corollary 8.8 of \cite{PT1}.
    
We summarize this in the following

\begin{proposition}\label{prop:geod-F}
Let $\Omega$ be an open convex  subset of $\mathbb{H}^n$ and let $F(x,y)$ be its Funk metric. Then,
\begin{enumerate}
\item the hyperbolic geodesics of $\Omega$ are also Funk geodesics;
\item the Funk geodesics of $\Omega$ are hyperbolic geodesics if and only if  $\Omega$ is strictly convex, that is, if its boundary does not contain any nonempty open hyperbolic segment.
\end{enumerate}
\end{proposition}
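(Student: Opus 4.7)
My plan is to deduce both assertions from the variational formula $F(x,y)=\sup_{\pi\in\mathcal{P}}\log(\sinh d(x,\pi)/\sinh d(y,\pi))$ established above, combined with the equality criterion derived in the paragraphs just preceding the statement: $F(x,y)+F(y,z)=F(x,z)$ whenever $\mathcal{P}(b(x,y))\cap\mathcal{P}(b(y,z))\neq\emptyset$. For part~(1), I take $x,y,z$ on a common hyperbolic geodesic with $y$ between $x$ and $z$ in the direction of $\partial\Omega$. The rays $R(x,y)$ and $R(y,z)$ are then sub-rays of a single geodesic, so $b(x,y)=b(y,z)$; any supporting hyperplane at this common boundary point lies in the intersection of $\mathcal{P}(b(x,y))$ and $\mathcal{P}(b(y,z))$, and the criterion immediately gives $F(x,z)=F(x,y)+F(y,z)$, so the hyperbolic segment from $x$ to $z$ through $y$ is a Funk geodesic.

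For the ``if'' direction of (2), I assume $\Omega$ is not strictly convex, so that $\partial\Omega$ contains a nondegenerate hyperbolic segment $\overline{pq}$. The proposition stated immediately before our statement directly produces non-collinear triples $x,y,z$ with $R(x,y)\cap\overline{pq}\neq\emptyset$ and $R(y,z)\cap\overline{pq}\neq\emptyset$ for which $F(x,y)+F(y,z)=F(x,z)$; the broken path $x\to y\to z$ is then a Funk geodesic that is not a hyperbolic geodesic.

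Conversely, assume $\Omega$ is strictly convex and that $F(x,y)+F(y,z)=F(x,z)$. First, strict convexity forbids two distinct boundary points $b_1\neq b_2$ from sharing a supporting hyperplane $\pi_0$, for the segment $\overline{b_1 b_2}$ would otherwise lie in $\pi_0\cap\overline{\Omega}\subset\partial\Omega$. Second, rewriting the hypothesis as
\[
\sup_{\pi}\log\frac{\sinh d(x,\pi)}{\sinh d(y,\pi)}+\sup_{\pi}\log\frac{\sinh d(y,\pi)}{\sinh d(z,\pi)}=\sup_{\pi}\log\frac{\sinh d(x,\pi)}{\sinh d(z,\pi)},
\]
the elementary fact that equality in $\sup f+\sup g\geq\sup(f+g)$ forces a joint maximizer, together with the fact (made explicit just above via the auxiliary points $T(x,\xi_{xy},\pi)$) that the first two suprema are attained precisely on $\mathcal{P}(b(x,y))$ and $\mathcal{P}(b(y,z))$ respectively, produces some $\pi_0\in\mathcal{P}(b(x,y))\cap\mathcal{P}(b(y,z))$. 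By the first observation $b(x,y)=b(y,z)$, and uniqueness of hyperbolic geodesics through two points then forces $R(y,z)$ to be a sub-ray of $R(x,y)$, so $x,y,z$ are collinear on a single hyperbolic geodesic. The only delicate step I anticipate is this joint-maximizer deduction, since it requires both the attainment of the suprema and the transfer of the additive equality into a common attaining hyperplane; both ingredients are already implicit in the similarity analysis recalled above, and once they are in hand the rest is bookkeeping with the equality criterion and the exclusion of shared supporting hyperplanes under strict convexity.
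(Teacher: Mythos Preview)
Your proof is correct and follows exactly the argument the paper develops in the paragraphs immediately preceding the proposition: the variational formula, the equality criterion $\mathcal{P}(b(x,y))\cap\mathcal{P}(b(y,z))\neq\emptyset$, and the identification of strict convexity with disjointness of the sets $\mathcal{P}(b_1)$ and $\mathcal{P}(b_2)$ for distinct boundary points. The paper presents this reasoning as a running discussion and then merely ``summarizes'' it as the proposition; your joint-maximizer deduction for the converse direction is precisely what the paper's phrase ``from the preceding argument'' is gesturing at, and the delicate point you flag (that the suprema are attained \emph{only} on $\mathcal{P}(b(x,y))$ and $\mathcal{P}(b(y,z))$) is left equally implicit in the paper.
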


There is another proof of Proposition \ref{prop:geod-F}  that gives at the same time the 
triangle inequality for the Funk metric. It imitates the classical proofs of the triangle inequality given in the case of the Euclidean Funk geometry that are given in \cite{Busemann1953} p. 158 and \cite{Zaustinsky} p. 85. We present it in the appendix of this article.

Let us also note the following
\begin{proposition}
Any hyperbolic geodesic segment starting at a point $x$ in $\Omega$ and ending at a point on $\partial\Omega$ is (the image of) a geodesic ray for the Funk metric on $\Omega$.
\end{proposition}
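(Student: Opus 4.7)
The plan is to show that a hyperbolic geodesic ray from $x\in\Omega$ ending at some $b\in\partial\Omega$ not only enjoys the Funk-geodesic property locally (which is already Proposition \ref{prop:geod-F}), but that the three-point additivity of $F$ extends along the entire ray, and that its Funk-length is infinite so that it can be parameterized as a genuine ray $[0,\infty)\to\Omega$.

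First I would parameterize the hyperbolic geodesic by hyperbolic arc length, writing $\sigma:[0,T)\to\Omega$ with $\sigma(0)=x$ and $\sigma(t)\to b$ as $t\to T$ (where $T=d(x,b)$). The crucial observation is that for any two parameters $0\le s<t<T$, the point $\sigma(s)$ lies on $\sigma$ and the hyperbolic ray $R(\sigma(s),\sigma(t))$ is the sub-ray of $\sigma$ itself; hence
\[
b(\sigma(s),\sigma(t))=b
\]
for every such pair. Thus Definition \ref{d:Funk-h} yields the explicit formula
\[
F(\sigma(s),\sigma(t))=\log\frac{\sinh d(\sigma(s),b)}{\sinh d(\sigma(t),b)}=\log\frac{\sinh(T-s)}{\sinh(T-t)}.
\]

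With this formula in hand, the additivity is immediate: for any $0\le r<s<t<T$,
\[
F(\sigma(r),\sigma(s))+F(\sigma(s),\sigma(t))
=\log\frac{\sinh(T-r)}{\sinh(T-s)}+\log\frac{\sinh(T-s)}{\sinh(T-t)}
=F(\sigma(r),\sigma(t)),
\]
which is precisely the geodesic equality in the weak-metric sense. This handles the triangle-equality aspect without having to invoke the supporting-hyperplane characterization; alternatively one may recover the same conclusion from the clause ``$x,y,z$ collinear'' case in the discussion following the proof of the triangle inequality.

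To make this into an honest ray, I would reparameterize by Funk arc length. Since $F(x,\sigma(t))=\log(\sinh T/\sinh(T-t))$ tends to $+\infty$ as $t\to T$, the function $t\mapsto F(x,\sigma(t))$ is a continuous, strictly increasing bijection from $[0,T)$ onto $[0,\infty)$. Its inverse $\tau(\cdot)$ produces a reparameterization $\tilde\sigma(\tau)=\sigma(t(\tau))$ of the hyperbolic segment for which $F(\tilde\sigma(\tau_1),\tilde\sigma(\tau_2))=\tau_2-\tau_1$ whenever $\tau_1\le\tau_2$, exhibiting $\tilde\sigma:[0,\infty)\to\Omega$ as a geodesic ray for the Funk metric. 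I do not anticipate any serious obstacle: the whole argument rests on the elementary fact that $b(\sigma(s),\sigma(t))$ is the common endpoint $b$, together with the closed-form expression for $F$ that follows from it; the only mild point to verify is the divergence of the Funk length, which is immediate from $\sinh(T-t)\to0$.
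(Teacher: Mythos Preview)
Your proof is correct and follows the same route as the paper, which simply observes that the result follows from Definition \ref{d:Funk-h} together with the fact that hyperbolic geodesics are Funk geodesics. You have made explicit the two points the paper leaves implicit: the common boundary point $b(\sigma(s),\sigma(t))=b$ yielding the closed formula $F(\sigma(s),\sigma(t))=\log\frac{\sinh(T-s)}{\sinh(T-t)}$, and the divergence of Funk length as $t\to T$ needed to reparameterize the segment as a ray $[0,\infty)\to\Omega$.
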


\begin{proof}
This follows from the formula defining the Funk metric (Definition \ref{d:Funk-h}) and the fact that the hyperbolic geodesics in $\Omega$  are Funk geodesics.
\end{proof}

\begin{example}
Let us consider the particular example of a Funk metric, where the open convex set $\Omega$ is an ideal triangle of ${\mathbb H}^2$.  
We choose the case of the ideal triangle because such a triangle exists only in hyperbolic geometry. We can model the triangle to be the region in the upper half plane bounded by the $y$-axis, which we call $\pi_1$, the line $\{x=1\}$, which we call $\pi_2$ and the semi-circle $\pi_3$ connecting the origin and $(1, 0)$ and which is perpendicular to the real line. The lines $\pi_1,\pi_2,\pi_3$ are also the supporting hyperplanes of $\Omega$. Without loss of generality, we suppose that two distinct points $x_1$ and $x_2$ in $\Omega$ are located so that the hyperbolic geodesic ray from $x_1$ through $x_2$ hits the $y$-axis $\pi_1$ at $b(x_1, x_2)$, and hence the hyperbolic Funk distance
\[
F(x_1, x_2) = \max_{\pi_1, \pi_2, \pi_3} \log  \frac{\sinh d(x_1, \pi_i)}{\sinh d(x_2, \pi_i)} 
\]     
is realized by $\log \displaystyle \frac{\sinh d(x_1, \pi_1)}{\sinh d(x_2, \pi_1)}$.
Then an elementary calculation gives an explicit value of the Funk distance as 
\[
F(x_1, x_2) = \log \Big( \frac{1- m_1^2}{1- m_2^2 } \cdot \frac{m_2}{m_1} \Big)
\]
where $m_i$ is the absolute value of the slope of the (Euclidean) line segment connecting $x_i$ and the (hyperbolic) foot of $x_i$ on the $y$-axis $\pi_1$. 
  
\end{example}

We next consider the infinitesimal linear structure of the Funk metric $F$, by identifying it with a Finsler norm on the tangent spaces. We first recall the Euclidean setting. In this setting, the Funk metric is induced by a Finsler structure, the \emph{tautological weak Finsler structure} in the sense of~\cite{PT1}, given by the following Minkowski functional:
\[
p_{\Omega, x}(\xi) = \sup_{\pi \in {\mathcal{P}}}  \frac{\| \xi\|}{d(x, T(x, \xi, \pi))}.
\]
In this formula, the supremum is achieved when the supporting hyperplane $\pi$ supports $\Omega$ at
the point where the ray $\{x+ t \xi \}$ meets the boundary set $\partial \Omega$. 
Using similarity of Euclidean triangles, this can be written as
\[
p_{\Omega, x}(\xi) = \sup_{\pi \in {\mathcal{P}}}  \frac{\langle \xi, \eta_\pi \rangle}{d(x, \pi)} 
\] 
where $\eta_\pi$ is the unit tangent vector at $x$ with direction opposite to the gradient vector of the functional $d(., 
\pi)$. We note that the gradient vector field's integral curves are the geodesics which meet the supporting hyperplane perpendicularly.

The infinitesimal linear structure of the Funk metric is obtained by linearizing the following expressions for a fixed $\pi \in {\mathcal P}$,
\[
 \log \frac{d(x, T(x, \xi, \pi))}{d(\alpha(t), T(x, \xi, \pi))} \mbox{ or equivalently } 
 \log \frac{d(x, \pi)}{d(\alpha(t), \pi)},
\]
 where $\alpha:[0,\infty) \rightarrow {\mathbb R}^n$ is a geodesic ray  with $\alpha(0)=x, \alpha(1) = y$ and $\|\alpha'(t)\|= \mbox{const.}$ and where
$T(x, \xi, \pi)$ is the point where the geodesic ray and the hyperplane $\pi$ intersect.  We then have 
\[
\frac{\| \alpha'(0) \|}{d(x, T(x, \alpha'(0), \pi))}
\mbox{ or equivalently } 
\frac{\langle \alpha'(0), \eta_\pi \rangle}{d(x, \pi)}. 
\]

In turn, In the hyperbolic space we identify the value of the Minkowski functional for the Funk metric $F$ as
\begin{equation}\label{fo:1}
p_{\Omega, x}(\xi) = \sup_{\pi \in {\mathcal{P}}} \frac{\cosh d(x,  T(x, \xi, \pi) ) }{\sinh d(x,  T(x, \xi, \pi))} 
\| \xi \|,
\end{equation}
or equivalently as  
\begin{equation}\label{fo:2}
p_{\Omega, x}(\xi) = \sup_{\pi \in {\mathcal{P}}} \frac{\cosh d(x, \pi) }{\sinh d(x, \pi)} \langle \eta_{\pi}(x), \xi \rangle
\end{equation}
where $\eta_\pi$ is the unit tangent vector at $x$ whose 
direction is opposite to the one of the gradient vector of the functional $d(*, 
\pi)$ at $x$.  In this formula, it is easy to see that the supremum is achieved when the point $T(x, \xi, \pi)$ coincides with a boundary point $b \in \partial \Omega$, namely $\pi \in \mathcal{P}(T(x, \xi, \pi))$.

The two representations come from the linearizations of 
\[
 \log \frac{\sinh d(x, T(x, \xi, \pi))}{\sinh d(\alpha(t), T(x, \xi, \pi))} \mbox{ and } 
 \log \frac{\sinh d(x, \pi)}{\sinh d(\alpha(t), \pi)} 
\]
respectively, where $\alpha:[0,\infty) \rightarrow {\mathbb H}^n$ is a geodesic ray  with $\alpha(0)=x, \alpha(1) = y$ and $\|\alpha'(t)\|= \mbox{const.}$ and where$T(x, \xi, \pi)$ is the point where the geodesic ray and the hyperplane $\pi$ intersect.

In either representation, it is easy to see that the functional 
is convex in $\xi \in T_x \mathbb{R}^n$, since the functional $ p_{\Omega, x}$ is
convex  (linear in particular)  in $\xi$ for each fixed $\pi \in {\mathcal{P}}$, and since by taking the $\sup$ over $\pi $,  the 
convexity is preserved.  

Alternatively one can see the convexity of the \emph{indicatrix} $C(x, 
\Omega)$, that is, the set of vectors with norm equal to 
one:
\[
C(x, \Omega) := \{ \xi \in T_x {\mathbb H}^n \,\, | \,\, p_{\Omega, x}(\xi) = 1 \}, 
\]
by noting that for $\xi_1$ and $\xi_2$ in $C(x, \Omega)$, we have
\begin{eqnarray*}
p_{\Omega, x}(\frac{\xi_1 + \xi_2}{2}) &=& \sup_{\pi \in {\mathcal{P}}} \frac{\cosh d(x, \pi) }{\sinh d(x, \pi)} \langle \nu_{\pi}(x), 
\frac{\xi_1 + \xi_2}{2} \rangle \\ &\leq &
\frac12 \sup_{\pi \in {\mathcal{P}}} \frac{\cosh d(x, \pi) }{\sinh d(x, \pi)} \langle \nu_{\pi}(x), \xi_1 \rangle + \frac12 \sup_{\pi \in {\mathcal{P}}} \frac{\cosh d(x, \pi) }{\sinh d(x, \pi)} \langle \nu_{\pi}(x), \xi_2 \rangle \\
& = & \frac12 + \frac12 = 1.
\end{eqnarray*}
This says $(\xi_1 + \xi_2)/2$ lies inside the indicatrix $C(x, \Omega)$, hence the unit ball of the norm $p_{\Omega, x}$ is a convex set in $T_x \Omega$.

We now show that the infimum, among all piecewise $C^1$-paths $\gamma$ with given endpoints, of the length computed with the Finsler norm
$p_{\Omega, x}$
coincides with the Funk metric. Namely:

\begin{theorem}
The Finsler distance $d(x, y)$ induced by the Minkowski functional $p_{\Omega, x}$ (\ref{fo:1}) or (\ref{fo:2}) coincides with the hyperbolic Funk metric $F$ on $\Omega
\subset {\mathbb H}^n$.
\end{theorem}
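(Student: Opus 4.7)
The plan is to prove two inequalities: the Finsler length of any admissible path from $x$ to $y$ is bounded below by $F(x,y)$, and the hyperbolic geodesic from $x$ to $y$ (which lies in $\Omega$ by convexity) realizes length exactly $F(x,y)$.

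For the upper bound, I would parametrize by arc length the hyperbolic geodesic $\alpha:[0,L]\to\Omega$ with $\alpha(0)=x$, $\alpha(L)=y$ and $L=d(x,y)$, extended until it hits $\partial\Omega$ at $b=b(x,y)$. Along this geodesic, the ray $\{\exp_{\alpha(t)} s\,\alpha'(t):s\ge0\}$ strikes $\partial\Omega$ at the same point $b$, so $T(\alpha(t),\alpha'(t),\pi)=b$ precisely when $\pi\in\mathcal{P}(b)$. Using the remark just after (\ref{fo:2}) that the supremum in the Minkowski functional is achieved for such $\pi$, one gets $p_{\Omega,\alpha(t)}(\alpha'(t))=\coth d(\alpha(t),b)$. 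Since $d(\alpha(t),b)=d(x,b)-t$, the length integral reduces to $\int_0^{L}\coth(d(x,b)-t)\,dt$, and a direct antidifferentiation yields $\log(\sinh d(x,b)/\sinh d(y,b))=F(x,y)$.

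For the lower bound, I would use the second representation (\ref{fo:2}). Fix an arbitrary supporting hyperplane $\pi\in\mathcal{P}$; for any piecewise $C^{1}$ path $\sigma:[a,b]\to\Omega$ with $\sigma(a)=x$, $\sigma(b)=y$,
\[
p_{\Omega,\sigma(t)}(\sigma'(t))\;\ge\;\frac{\cosh d(\sigma(t),\pi)}{\sinh d(\sigma(t),\pi)}\,\langle\eta_{\pi}(\sigma(t)),\sigma'(t)\rangle.
\]
Because $\eta_{\pi}$ is defined to point opposite to the gradient of the distance function $d(\cdot,\pi)$, one has $\langle\eta_{\pi}(\sigma(t)),\sigma'(t)\rangle=-\tfrac{d}{dt}d(\sigma(t),\pi)$, so the integrand equals $-\tfrac{d}{dt}\log\sinh d(\sigma(t),\pi)$. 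Integrating and then taking the supremum over $\pi\in\mathcal{P}$ gives
\[
\int_{a}^{b}p_{\Omega,\sigma(t)}(\sigma'(t))\,dt\;\ge\;\sup_{\pi\in\mathcal{P}}\log\frac{\sinh d(x,\pi)}{\sinh d(y,\pi)}\;=\;F(x,y),
\]
by the variational formula already established.

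Combining the two bounds identifies the Finsler distance with $F(x,y)$. The main delicate point is step one, where I must justify that the supremum defining $p_{\Omega,\alpha(t)}(\alpha'(t))$ is actually attained along the geodesic and that $\alpha$ remains in $\Omega$ throughout (both handled by convexity and by the similarity-of-right-triangles argument used earlier to establish the variational formula). The lower-bound step is then essentially the hyperbolic analogue of the Euclidean telescoping $\int -d(\log d(\sigma,\pi))$, with $\sinh$ replacing the Euclidean distance, which is precisely the trigonometric substitution that the whole section has been built around.
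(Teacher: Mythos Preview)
Your proof is correct and follows essentially the same two-step strategy as the paper: compute the Finsler length of the hyperbolic geodesic to obtain $F(x,y)$ as an upper bound, and for the lower bound use that the Minkowski functional dominates any single $\pi$-term, whose integral along an arbitrary curve telescopes to $\log\frac{\sinh d(x,\pi)}{\sinh d(y,\pi)}$. The paper phrases the lower bound as a comparison with the Funk metric of the half-space bounded by $\pi_{b(x,y)}$ alone, but this is the same computation you carry out; if anything, your version makes the telescoping integration and the role of the gradient identity $\langle\eta_{\pi},\sigma'\rangle=-\tfrac{d}{dt}d(\sigma,\pi)$ more explicit than the paper does.
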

 
\begin{proof}
For a given pair of points $x$ and $y$ in $\Omega$, let $\alpha(t) 
$ be the hyperbolic geodesic ray $\exp_x t \xi_{xy}$ from $x$ through $y$, and let 
$b(x, y) = T(x, \xi_{xy}, \pi_{b(x,y)})$ be the point where this geodesic 
ray hits the boundary $\partial \Omega$ (with the 
notation we have been using in the Euclidean case). Then the $F$-length of the curve $
\alpha$ from  $x$ to $y$ is 
\[
L(\alpha) = \int_0^1 p_{\Omega, \alpha(t)} (\alpha'(t)) dt = \log 
\frac{\sinh d(x, b(x, y))}{\sinh d (y, b(x, y))} = F(x, y).
\]  
Thus, the Finsler distance $d(x, y) = \inf_\gamma L(\gamma)$ is bounded above by the Funk-
type distance $F(x, y)$. 

On the other hand, consider the convex set in ${\mathbb H}^n$ 
bounded by the supporting hypersurface $\pi_{b(x, y)}$ alone.  This 
set has its own Funk metric,
\[
F_{\pi_{b(x, y)}}(s, t) := \log \frac{\sinh d(s, \pi_{b(x, y)})}{\sinh d (t, \pi_{b(x, y)})}
\]
There is a simple comparison $F(s, t) \geq F_{\pi_{b(x, y)}}(s, 
t)$, as $F(s, t)$ is the supremum of $\displaystyle \log \frac{\sinh d(s, \pi)}{\sinh d (t, \pi)}$ over all the supporting hypersurfaces 
of which $\pi_{b(x, y)}$ is one.  Hence for the pair $(x, y)$, 
the value  
\[
F_{\pi_{b(x, y)}}(x, y) = \log \frac{\sinh d(x, \pi_{b(x, y)})}{\sinh d (y, \pi_{b(x, y)})}
\]
provides a lower bound for $F(x, y)$.

As we have the equality 
\[
\log \frac{\sinh d(x, b(x, y))}{\sinh d (y, b(x, y))}=
\log \frac{\sinh d(x, \pi_{b(x, y)})}{\sinh d (y, \pi_{b(x, y)})},
\]
we conclude that $d(x, y) = F(x, y)$.
\end{proof}
 
We end this section by the following result, which contrasts with the case of Euclidean geometry:
 \begin{proposition}\label{prop:nonconvex}
 Given a point $x$ in $\Omega$, the Funk distance function $F(x,y)$ is not convex in $y$.
 \end{proposition}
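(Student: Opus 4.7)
The plan is to exhibit an explicit counterexample in $\mathbb{H}^2$. One might naively hope to derive convexity of $F(x,\cdot)$ from its variational description as a supremum, as happens in Euclidean space; the point is that the $\sinh$-factor destroys this inheritance, and the task is to display a specific geodesic along which the failure is visible.

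I would take $\Omega$ to be an open half-plane. Modelling $\mathbb{H}^2$ by the upper half-plane $\{(a,b):b>0\}$, let $\pi$ be the $y$-axis and set $\Omega=\{(a,b)\in\mathbb{H}^2:a>0\}$. Since $\pi$ is the unique supporting hyperplane of $\Omega$, the variational formula proved above reduces to
\[
F(x,y)=\log\sinh d(x,\pi)-\log\sinh d(y,\pi),
\]
and the standard identity $\sinh d((a,b),\pi)=a/b$ converts the convexity question into whether $y\mapsto\log(a/b)$ is concave along every hyperbolic geodesic of $\Omega$.

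Next, I would test this along the hyperbolic geodesic $\gamma(\theta)=(c+R\cos\theta,R\sin\theta)$, $\theta\in(0,\pi)$, a Euclidean semicircle of radius $R$ centred at $(c,0)$ with $c>R>0$, which lies entirely in $\Omega$. Writing $f(\theta)=\log(c+R\cos\theta)-\log\sin\theta$ and differentiating twice gives
\[
f''(\pi/2)=1-R^2/c^2>0.
\]
Since the hyperbolic arc-length parameter $s(\theta)$ satisfies $s'(\pi/2)=\csc(\pi/2)=1$ and $s''(\pi/2)=-\csc(\pi/2)\cot(\pi/2)=0$, the chain rule yields $(d^2f/ds^2)(s_0)=f''(\pi/2)>0$ at the apex $s_0=s(\pi/2)$. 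Hence $s\mapsto F(x,\gamma(s))$ has strictly negative second derivative at $s_0$, and for all sufficiently small $\varepsilon>0$ the points $y_1=\gamma(s_0-\varepsilon)$, $y_0=\gamma(s_0)$, $y_2=\gamma(s_0+\varepsilon)$ satisfy $F(x,y_0)>\tfrac12\bigl(F(x,y_1)+F(x,y_2)\bigr)$, contradicting the convexity of $F(x,\cdot)$ along $\gamma$.

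The main obstacle is conceptual rather than computational: along geodesics perpendicular to $\pi$, and along those sharing an ideal endpoint with $\pi$, the function $\log\sinh d(\cdot,\pi)$ is concave or linear in arc length, so the hoped-for convexity survives on those families and one can be misled. The essential step is to identify an ``equidistant-like'' geodesic, a Euclidean semicircle whose centre lies well beyond $\pi$, along which the $\sinh$-factor forces strict convexity of $\log\sinh d(\cdot,\pi)$, and to isolate this failure by the short computation $f''(\pi/2)=1-R^2/c^2$ at its apex.
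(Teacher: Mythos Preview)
Your argument is correct. You specialize to the half-plane $\Omega=\{a>0\}$ in the upper half-plane model, where the variational formula collapses to a single hyperplane $\pi$ and the identity $\sinh d((a,b),\pi)=a/b$ holds; the computation $f''(\pi/2)=1-R^2/c^2>0$ together with the chain-rule passage to arc length is accurate and pins down a concrete geodesic along which $F(x,\cdot)$ fails to be convex.

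The paper proceeds differently: it writes down the second variation of $t\mapsto\log\frac{\sinh d(x,\pi)}{\sinh d(\alpha(t),\pi)}$ for a general supporting hyperplane $\pi$ and a general unit-speed geodesic $\alpha$, obtaining a sum of a nonnegative term $\sinh^{-2}d(y,\pi)\,\langle\dot\alpha,\eta_\pi\rangle^2$ and a term $\coth d(y,\pi)\,\frac{d}{dt}\langle\dot\alpha,\eta_\pi\rangle$ which it argues is negative because negative curvature forces the angle between $\dot\alpha$ and the perpendicular field $\eta_\pi$ to increase. The paper then concludes that the second derivative ``cannot be of a definite sign.'' Your approach is more explicit and arguably more complete: the paper exhibits the two competing terms but does not actually display a configuration in which the negative term dominates, whereas your computation does exactly that. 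What the paper's route buys is a conceptual explanation---the failure is attributed directly to negative sectional curvature, in clean contrast with the convexity proofs in the Euclidean and spherical cases---while your route trades that conceptual picture for an unambiguous, verifiable counterexample.
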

 \begin{proof}

Consider a point $x$ in $\Omega$ and a constant speed geodesic segment $\alpha: [0,1] \rightarrow 
{\mathbb H}^n$ with $\alpha(0)=y$ where $y$ is any point in $\Omega$.  For a fixed supporting hyperplane $\pi$ of $\Omega$, we denote by $\eta_\pi$ the vector field that is the vector opposite to the gradient vector field of the functional $d(., \pi)$. Then the first and second derivative of the Funk distance function are given as:
\[
\frac{d}{dt} \log \frac{\sinh d(x, \pi)}{\sinh d(\alpha(t), \pi)} \ = 
\frac{\cosh d(\alpha(t), \pi)}{\sinh d(\alpha(t), \pi)}
 \langle \dot{\alpha}(t), \eta_\pi (\alpha(t)) \rangle
\]
 and 
\begin{eqnarray*}
\frac{d^2}{dt^2} \log \frac{\sinh d(x, \pi)}{\sinh d(\alpha(t), \pi)} \Big|_{t=0} 
 & = &\frac{1}{\sinh^2 d(y, \pi)} \langle \dot{\alpha}(0), \eta_\pi(x) \rangle^2 \\
 & &
+ \frac{\cosh d(y, \pi)}{\sinh d(y, \pi)} \Big( \frac{d}{dt} \langle \dot{\alpha}(t), \eta_\pi (\alpha(t)) \rangle \Big|_{t=0} \Big). 
\end{eqnarray*}

Note that the quantity $\langle \dot{\alpha}(t), \eta_\pi (\alpha(t)) \rangle $ is equal to 
$\| \dot{\alpha}(t) \| \cos \theta(t)$ where $\theta(t)$ is the angle between the velocity vector 
$\dot{\alpha}(t)$ and the unit vector $\eta_\pi(\alpha(t))$, and that the speed $\| \dot{\alpha}(t) \| $ is constant over time.  Now the negative sectional  curvature of ${\mathbb H}^n$ implies that the angle $\theta(t)$ is increasing in $t$, which in turns implies  
\[
\frac{d}{dt} \langle \dot{\alpha}(t), \eta_\pi (\alpha(t)) \rangle  < 0.
\]   
Therefore the value of the second derivative cannot be of a definite sign, proving the claim. 
\end{proof}
 
It is known (\cite{Y3}) that in contrast, the Euclidean Funk metric $F(x, y)$ {\it is} convex in $y$.  
 
\subsection{The Funk metric in spherical geometry}

We shall use the language we are used to in Euclidean and hyperbolic geometry, namely, we shall call a \emph{hyperplane} a complete totally geodesic codimension-one subspace. Likewise, a (closed or open) connected component in $S^n$  bounded by a hyperplane is  called a \emph{half-space}.

The definition of a convex set on the sphere is more delicate than the one in Euclidean or in hyperbolic space, because given any distinct two points on the sphere, there are two distinct geodesics (arcs of great circle) joining them. Even if we insist on geodesics of shortest length, for some pairs of points of the sphere (namely, for points which are diametrically opposite), there are two distinct geodesics of shortest length. For this reason, we give the following

\begin{definition}
A subset $\Omega$ of the sphere is said to be \emph{convex} if $\Omega$  is contained in an open half-space and if every pair of points in $\Omega$ can be connecetd by a geodesic which is contained in $\Omega$.
\end{definition}

\subsection{Definition}
Given a convex set $\Omega$ in $S^n$, represent it as $\cap_{\pi(b) \in {\mathcal P}} H_{\pi(b)}$ where $H_{\pi(b)}$ is a half-space bounded by a hyperplane $\pi(b)$ tangent to $\partial \Omega$ at a  boundary point $b$.  We call these submanifolds $\pi(b)$ {\it supporting hyperplanes} 
of $\Omega$.  The set $\mathcal P$ indexes the set of all supporting hyperplanes of $\Omega$, and the set ${\mathcal P}(b) \subset {\mathcal P}$ denotes the set of supporting hyperplanes 
at the boundary point $b$.   

\begin{remark}\label{sph.cvx} 

We  are considering the sphere of radius one, that is, the space of constant curvature $+1$. Otherwise, if the curvature is different from one, a constant factor has to be inserted in the trigonometric formulae. Then for a point $x$ in the convex set 
$\Omega$ and a hyperplane (great sphere) $\pi(b)$ supporting $\Omega$ at a boundary point $b \in \partial \Omega$, note that $d(x, \pi(b))$ is at most $\pi/2$. We also note that unless the point $x$ is the center of the hemisphere bounded by $\pi(b)$, the nearest point projection of $x$ to $\pi(b)$ is single valued.    
 
\end{remark}

Now we define the Funk metric in spherical geometry. We use the angular metric on the sphere, and we shall assume that the diameter of the open convex set $\Omega$ is $<\pi/2$ for reasons that will become apparent immediately after the next definition.

\begin{definition}\label{d:Funk-s}
For a pair of points $x$ and $y$ in $\Omega \subset S^n$, we define the Funk (asymmetric) metric by 
\begin{equation*}
F(x, y)= 
\begin{cases}  \displaystyle  \log \frac{\sin d(x, b(x, y))}{\sin d(y, b(x,y))}& \text{if $x\not=y$,}
\\
0 &\text{if $x=y$}
\end{cases}
\end{equation*}
where (as in the Euclidean and the hyperbolic cases) the point $b(x, y)$ is the first intersection point of the boundary $\partial \Omega$ with the geodesic ray $\{ \exp_x (t \xi_{xy} \,\, | \,\, t>0 \}$ from $x$ through $y$, and $\xi_{xy}$ is the unit tangent vector at $x$ of the arc-length parameterized geodesic connecting $x$ and $y$. 
\end{definition}

Note that the sine function is strictly increasing for angles between $0$ and $\pi/2$, and this makes the Funk distance $F(x,y)$ always nonnegative.

We recall that for a given spherical right triangle $\triangle(A,B,C)$
with angles $\alpha, \beta$ and $\gamma = \pi / 2$ and with side lengths $a,b$ and $c$  
opposite to the vertices $A, B$ and $C$ respectively, we have the formula
\[
\sin b = \sin c \sin \beta.  
\]
In analogy with the hyperbolic case, we note that choosing a point $A'$ on the side  $c$ and letting  $C'$ be its nearest point projection on the side $a$, we have another right triangle $\triangle(A',B,C')$ with angles
$\alpha', \beta,\gamma' $, with $\gamma'= \pi/2$, and with side lengths $a',b,c'$  
opposite to the vertices $A', B$ and $C'$ respectively, satisfying 
\[
\sin b' = \sin c' \sin \beta.  
\]
Again, as the ratios $\sin b / \sin c$ and $ \sin b' / \sin c' $ are equal to $\sin \beta$, we regard the two triangles $\triangle(A,B,C)$ and $\triangle(A',B,C')$ as {\it similar} with side lengths being weighted by the function $\sin$.  

Following the same argument as in hyperbolic geometry, we have the following variational formula for the Funk metric:

\begin{theorem} The Funk metric (Definition \ref{d:Funk-s}) on a convex subset $\Omega \subset S^n$ is also given by:
\[
F(x, y) = \sup_{\pi \in {\mathcal{P}}} \log \frac{\sin d(x, \pi)}{\sin d(y,  \pi)}.
\]    
\end{theorem}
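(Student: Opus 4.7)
The plan is to imitate step by step the proof already carried out in the hyperbolic case, with $\sin$ replacing $\sinh$ and the right-triangle identity $\sin b = \sin c \sin \beta$ replacing its hyperbolic counterpart. The standing assumption that the diameter of $\Omega$ is less than $\pi/2$, together with Remark \ref{sph.cvx}, guarantees that all distances from points of $\Omega$ to each other and to any supporting hyperplane lie in the open interval $(0,\pi/2)$, on which $\sin$ is strictly increasing. This is the spherical substitute for the monotonicity of $\sinh$ on $(0,\infty)$ used in the hyperbolic proof, and it is what keeps all the ratios positive and the monotonicity arguments valid.

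First I would fix a supporting hyperplane $\pi_0 \in \mathcal{P}(b(x,y))$ at $b(x,y)$ and drop perpendiculars from $x$ and $y$ onto $\pi_0$ obtaining feet $\Pi_{\pi_0}(x)$ and $\Pi_{\pi_0}(y)$. The two right triangles $\triangle(x,\Pi_{\pi_0}(x),b(x,y))$ and $\triangle(y,\Pi_{\pi_0}(y),b(x,y))$ share the angle $\beta$ at $b(x,y)$ and lie in a common totally geodesic $2$-sphere $S^2\subset S^n$ (the one containing $x$, $b(x,y)$ and the foot $\Pi_{\pi_0}(x)$; $y$ lies on the geodesic $xb(x,y)$, so its foot lies in the same $S^2$). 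Applying the right-triangle identity in each of them gives the similarity relation
\[
\log \frac{\sin d(x,b(x,y))}{\sin d(y,b(x,y))} \;=\; \log \frac{\sin d(x,\pi_0)}{\sin d(y,\pi_0)},
\]
which in particular is independent of the choice of $\pi_0 \in \mathcal{P}(b(x,y))$.

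Next, given an arbitrary $\pi\in\mathcal{P}$, the spherical geodesic ray from $x$ through $y$ must cross $\pi$ once (because $\Omega$ lies on one side of $\pi$ and the ray eventually leaves that side), hitting $\pi$ at a well-defined point $T(x,\xi_{xy},\pi)$. The same similarity of right triangles, now applied with $T(x,\xi_{xy},\pi)$ in place of $b(x,y)$, yields
\[
\frac{\sin d(x,\pi)}{\sin d(y,\pi)} \;=\; \frac{\sin d(x,T(x,\xi_{xy},\pi))}{\sin d(y,T(x,\xi_{xy},\pi))}.
\]
Since $b(x,y)$ is the first point where the ray hits $\partial\Omega$, every other $T(x,\xi_{xy},\pi)$ sits strictly past $b(x,y)$ along the ray. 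Parametrising the ray by arclength $s$ and writing $c=d(x,y)$, the right hand side becomes $\sin s /\sin(s-c)$, whose logarithmic derivative $\cot s - \cot(s-c)$ is negative on $(c,\pi)$ because $\cot$ is strictly decreasing on $(0,\pi)$. Hence $\sin s/\sin(s-c)$ is strictly decreasing in $s$, and the supremum over $\pi\in\mathcal{P}$ is attained exactly when the intersection point equals $b(x,y)$, giving
\[
\sup_{\pi \in \mathcal{P}} \log \frac{\sin d(x,\pi)}{\sin d(y,\pi)} \;=\; \log \frac{\sin d(x,b(x,y))}{\sin d(y,b(x,y))} \;=\; F(x,y).
\]

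The main obstacle is not algebraic but geometric: one must keep all the arclengths involved inside an interval on which $\sin$ (and $\cot$) is strictly monotonic, so that the positivity of the ratios and the final sup-is-attained step do not break down. This is precisely the role of the diameter bound in Definition of convexity and Remark \ref{sph.cvx}. The remaining point to verify carefully is that the two right triangles used for the initial similarity really lie in a common spherical $2$-plane, so that the planar identity $\sin b = \sin c \sin\beta$ applies literally; this is the exact spherical counterpart of the corresponding step in the hyperbolic proof and requires only that $y$ and its foot on $\pi_0$ lie in the great $2$-sphere determined by $x$, $\Pi_{\pi_0}(x)$ and $b(x,y)$.
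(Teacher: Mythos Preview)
Your proposal is correct and follows essentially the same approach as the paper, which in fact gives no separate proof in the spherical case and simply writes ``Following the same argument as in hyperbolic geometry.'' Your write-up is actually more explicit than the paper's hyperbolic argument on the key monotonicity step: where the paper merely asserts that the nearest intersection point $b(x,y)$ maximizes the ratio, you compute the logarithmic derivative $\cot s - \cot(s-c)$ and use the strict decrease of $\cot$ on $(0,\pi)$, which is the correct spherical substitute and makes transparent why the diameter hypothesis matters.
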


For the sphere,  we identify the value of the Minkowski functional for the Funk metric $F$ as
\begin{equation}\label{fos:1}
p_{\Omega, x}(\xi) = \sup_{\pi \in {\mathcal{P}}} \frac{\cos d(x,  T(x, \xi, \pi) ) }{\sin d(x,  T(x, \xi, \pi))} 
\| \xi \|,
\end{equation}
or equivalently as  
\begin{equation}\label{fos:2}
p_{\Omega, x}(\xi) = \sup_{\pi \in {\mathcal{P}}} \frac{\cos d(x, \pi) }{\sin d(x, \pi)} \langle \eta_{\pi}(x), \xi \rangle
\end{equation}
where $\eta_\pi$ is the unit tangent vector at $x$ whose 
direction is opposite to the one of the gradient vector of the functional $d(*, 
\pi)$ at $x$.  
These expressions appear naturally by following the same argument we have seen for the hyperbolic Funk metric.   We then have the following analogous statement for the spherical Funk metric, the proof of which is also almost identical to the hyperbolic case;

\begin{theorem}
The Finsler distance $d(x, y)$ induced by the Minkowski functional $p_{\Omega, x}$ (\ref{fos:1}) or (\ref{fos:2}) coincides with the spherical Funk metric $F$ on $\Omega
\subset S^n$.
\end{theorem}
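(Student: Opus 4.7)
The plan is to mirror almost verbatim the proof already given in the hyperbolic case, since the only ingredients required are (i) the ``similarity'' of spherical right triangles with side-lengths weighted by $\sin$ (established earlier in this subsection) and (ii) the fact that the standing assumption $\mathrm{diam}(\Omega) < \pi/2$ keeps all distances $d(x,\pi)$ strictly inside $(0,\pi/2)$, where $\sin$ is strictly increasing and positive and $\cos$ is strictly positive. Both ingredients are already in place, so no new trigonometric machinery is needed.

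First I would prove the upper bound $d(x,y) \le F(x,y)$. Given $x,y \in \Omega$, let $\alpha(t) = \exp_x(t\,\xi_{xy})$ be the spherical geodesic ray from $x$ through $y$, parametrized so that $\alpha(0)=x$, $\alpha(1)=y$, and let $b(x,y) = T(x,\xi_{xy},\pi_{b(x,y)})$. Along this particular ray the supremum in (\ref{fos:1}) is attained at $\pi_{b(x,y)}$, so applying the right-triangle similarity identity for $\sin$, one checks that
\[
p_{\Omega,\alpha(t)}(\dot\alpha(t))
  \;=\; -\frac{d}{dt}\,\log \sin d(\alpha(t),b(x,y)).
\]
Integrating from $0$ to $1$ and using the fundamental theorem of calculus yields the Finsler length
\[
L(\alpha) \;=\; \log\frac{\sin d(x,b(x,y))}{\sin d(y,b(x,y))} \;=\; F(x,y),
\]
so $d(x,y) \le L(\alpha) = F(x,y)$.

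Next I would establish the reverse inequality $d(x,y) \ge F(x,y)$. For any piecewise $C^1$ curve $\gamma:[0,1]\to\Omega$ with $\gamma(0)=x$ and $\gamma(1)=y$, fix the single supporting hyperplane $\pi_0 := \pi_{b(x,y)}$. Since the $\pi=\pi_0$ term is one of those over which (\ref{fos:2}) takes its supremum,
\[
p_{\Omega,\gamma(t)}(\dot\gamma(t)) \;\ge\; \frac{\cos d(\gamma(t),\pi_0)}{\sin d(\gamma(t),\pi_0)}\,\langle \eta_{\pi_0}(\gamma(t)),\dot\gamma(t)\rangle.
\]
Because $\eta_{\pi_0}$ is by definition the unit vector opposite to $\nabla d(\cdot,\pi_0)$, letting $r(t) := d(\gamma(t),\pi_0)$ gives $\langle \eta_{\pi_0}(\gamma(t)),\dot\gamma(t)\rangle = -r'(t)$, so the right-hand side above equals $-\frac{d}{dt}\log\sin r(t)$. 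Integrating,
\[
L(\gamma) \;\ge\; \int_0^1 -\frac{d}{dt}\log\sin r(t)\,dt \;=\; \log\frac{\sin d(x,\pi_0)}{\sin d(y,\pi_0)} \;=\; F(x,y),
\]
the last equality holding because $\pi_0 = \pi_{b(x,y)}$ realizes the supremum in the variational formula for $F$. Taking the infimum over $\gamma$ gives $d(x,y) \ge F(x,y)$, completing the proof.

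The only genuine point of care in transplanting the hyperbolic argument is the sign convention: one has to check that with $\eta_\pi$ pointing into the half-space containing $\Omega$ (opposite to the gradient of $d(\cdot,\pi)$), the inequality in the lower-bound step and the telescoping sign work out consistently. The hypothesis that $\Omega$ lies in a single open hemisphere of diameter less than $\pi/2$ ensures that $\sin$ remains monotone and $\cos$ remains positive along the way, so there is no ambiguity, and this is the only place where the spherical argument might otherwise have differed from the hyperbolic one.
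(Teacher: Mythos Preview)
Your proof is correct and follows essentially the same route as the paper's (which simply says the argument is ``almost identical to the hyperbolic case''): the upper bound comes from computing the Finsler length of the spherical geodesic from $x$ to $y$, and the lower bound comes from discarding all supporting hyperplanes except $\pi_{b(x,y)}$ and integrating the resulting exact logarithmic derivative. Your presentation of the lower bound---writing the single-$\pi_0$ term explicitly as $-\frac{d}{dt}\log\sin r(t)$ and integrating---is in fact a bit more transparent than the paper's phrasing in the hyperbolic case, which packages the same computation as a comparison with the ``Funk metric of the half-space'' $F_{\pi_{b(x,y)}}$; but the underlying argument is identical.
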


\subsection{Convexity of $F(x, y)$ in the $y$-variable} 

Consider a point $x$ in $\Omega$ and a geodesic segment $\alpha: [0,1] \rightarrow 
S^n$ with $\alpha(0)=y$ where $y$ is any point in $\Omega$.  Then for a fixed great sphere $\pi$, and denoting bf by $\eta_\pi$ the vector field that is minus the gradient vector field of the functional $d(., 
\pi)$,   we have:
\[
\frac{d}{dt} \log \frac{\sin d(x, \pi)}{\sin d(\alpha(t), \pi)} \ = 
\frac{\cos d(\alpha(t), \pi)}{\sin d(\alpha(t), \pi)}
 \langle \dot{\alpha}(t), \eta_\pi (\alpha(t)) \rangle
\]
 and 
\begin{eqnarray*}
\frac{d^2}{dt^2} \log \frac{\sin d(x, \pi)}{\sin d(\alpha(t), \pi)} \Big|_{t=0} 
 & = &\frac{1}{\sin^2 d(y, \pi)} \langle \dot{\alpha}(0), \eta_\pi(x) \rangle^2 \\
 & &
+ \frac{\cos d(y, \pi)}{\sin d(y, \pi)} \Big( \frac{d}{dt} \langle \dot{\alpha}(t), \eta_\pi (\alpha(t)) \rangle \Big|_{t=0} \Big). 
\end{eqnarray*}
First note that for any $p \in \Omega$ and any supporting hyperplane $\pi \in {\mathcal P}$ we have $0 < d(p, \pi) < \pi/2$ as explained in Remark \ref{sph.cvx}. Hence the values of $\sin d(y, \pi)$ and $\cos d(y, \pi)$ are strictly positive for all $y \in \Omega$ and $\pi \in {\mathcal P}$.

Secondly note that the quantity $\langle \dot{\alpha}(t), \eta_\pi (\alpha(t)) \rangle $ is equal to 
$\| \dot{\alpha}(t) \| \cos \theta(t)$ where $\theta(t)$ is the angle between the velocity vector 
$\dot{\alpha}(t)$ and the unit vector $\eta_\pi(\alpha(t))$, and that the speed $\| \dot{\alpha}(t) \| $ is constant over time.  Now the positive curvature of $S^n$ implies that the angle $\theta(t)$ is decreasing in $t$, which in turns implies  
\[
\frac{d}{dt} \langle \dot{\alpha}(t), \eta_\pi (\alpha(t)) \rangle  >0.
\]   
Therefore the funtion $\displaystyle  \log \frac{\sin d(x, \pi)}{\sin 
d(\alpha(t), \pi)}$ is convex in $t$. 

Recall that the supremum of a set of convex functions is convex. Hence as a 
consequence of the variational formulation
\[
F(x, y) = \sup_{\pi \in {\mathcal{P}}} \log \frac{\sin d(x, \pi)}{\sin d(y,  \pi)}.
\]   
We have
\begin{theorem}\label{c:convex}
The Funk metric $F(x, y)$ defined on a convex set $\Omega \subset S^n$ is convex 
in the $y$-variable.   
\end{theorem}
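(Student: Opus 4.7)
The plan is to assemble the ingredients already laid out in the subsection into a one-line proof. For each fixed supporting hyperplane $\pi \in \mathcal{P}$, I would view the function $t \mapsto \log \frac{\sin d(x,\pi)}{\sin d(\alpha(t),\pi)}$ along a constant-speed geodesic $\alpha$ in $\Omega$ with $\alpha(0)=y$, and show it is convex in $t$. This follows directly from the expression for its second derivative already computed: one summand is a manifestly nonnegative square $\frac{1}{\sin^2 d(y,\pi)}\langle \dot\alpha(0), \eta_\pi(y)\rangle^2$, and the second is the product of the positive coefficient $\cos d(y,\pi)/\sin d(y,\pi)$ (positive by Remark \ref{sph.cvx}, since $0 < d(y,\pi) < \pi/2$) with the quantity $\frac{d}{dt}\langle \dot\alpha(t), \eta_\pi(\alpha(t))\rangle$, which is positive by the positive curvature argument just given.

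Once convexity in $t$ is established for each $\pi$, the proof concludes by invoking the standard fact that the supremum of a family of convex functions is convex. Combined with the variational formula
\[
F(x, \alpha(t)) = \sup_{\pi \in \mathcal{P}} \log \frac{\sin d(x, \pi)}{\sin d(\alpha(t), \pi)},
\]
this yields convexity of $t \mapsto F(x,\alpha(t))$, which is the desired conclusion since $\alpha$ was an arbitrary geodesic parametrization of a segment in $\Omega$.

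The main step requiring care — and the one the excerpt has done the work for — is justifying that $\theta(t)$, the angle between $\dot\alpha(t)$ and $\eta_\pi(\alpha(t))$, is decreasing along $\alpha$ on $S^n$. This is the manifestation of positive curvature: nearby geodesics emitted perpendicularly from the totally geodesic hypersurface $\pi$ reconverge, so $\eta_\pi$ rotates towards $\dot\alpha$ as one moves along $\alpha$. Concretely, this can be verified from the Hessian formula for the distance function to a totally geodesic hypersurface in constant curvature $+1$, which on tangential directions equals $-\tan(d(\cdot,\pi))$ times the induced metric. The remaining calculations are routine and the convexity of the $\sup$ is standard, so no further obstruction arises.
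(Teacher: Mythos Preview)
Your proposal is correct and follows essentially the same argument as the paper: you show that for each fixed supporting hyperplane $\pi$ the function $t\mapsto \log\frac{\sin d(x,\pi)}{\sin d(\alpha(t),\pi)}$ is convex along geodesics using the second-derivative computation and the positive-curvature fact that $\theta(t)$ decreases, and then conclude by taking the supremum over $\pi$ via the variational formula. The only addition is your more explicit justification of the angle monotonicity through the Hessian of the distance to a totally geodesic hypersurface, which the paper leaves as a one-line appeal to positive curvature.
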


Note that the result of Theorem \ref{c:convex} contrasts with the cases of hyperbolic geometry (Proposition \ref{prop:nonconvex}). As we already recalled, in Euclidean geometry, the Funk metric $F(x, y)$ is convex in $y$ (\cite{Y3}).

\section{Hilbert metrics and their projective geometry}
 
The symmetrization  of the hyperbolic and spherical Funk metrics by taking the arithmetic means provides a new set of Hilbert-type metrics, which we call the hyperbolic and spherical Hilbert metrics  respectively.  It is well known that the geometry of Hilbert metrics defined on convex sets in ${\mathbb R}^n$ is very much related to the projective geometry of ${\mathbb R}^{n+1}$. We will show below that the geometry of the hyperbolic/spherical Hilbert metrics, defined on convex sets of ${\mathbb H}^n$ and $S^n$ respectively,  are also described in terms of the projective geometry of ${\mathbb R}^{n+1}$.

  \subsection{The Hilbert metric in $\mathbb{H}^n$}
Let $\Omega$ be an open convex (possibly unbounded) subset of $\mathbb{H}^n$. 

We symmetrize the Funk metric by taking the arithmetic mean:
\begin{eqnarray*}
H(x, y) &= & \frac12 \Big( F(x, y) + F(y,x) \Big) \\ &=& \frac12  \log \Big[ \frac{\sinh d(x, b(x, y))}{\sinh d(y, b(x, y))} \cdot
\frac{\sinh d(y, b(y, x))}{\sinh d(x, b(y, x))} \Big]
\end{eqnarray*}
\begin{definition}
The metric $H(x,y)$ is a Hilbert-type metric on $\Omega$, and we call it the \emph{hyperbolic Hilbert metric} of $\Omega$. 
\end{definition}

For a convex set  $\Omega$ in ${\bf H}^d$, the geodesic  segment 
connecting $x$ and $y$ in $\Omega$ is a Funk geodesic realizing both lengths $F(x, y)$ 
and $F(y,x)$.  This implies that the geodesic segment is a Hilbert geodesic. 

The Hilbert metric satisfies the triangle inequality, obtained by adding both sides of the following pair of inequalities:
\[
F(x, y) + F(y,z) \geq F(x,z)
\]
and 
\[
F(z, y) + F(y,x)  \geq F(z, x)
\]

For any two points $x$ and $y$ in $X$, since the ength of the hyperbolic geodesic segment joining them realizes both distances $F(x,y)$ and $F(y,x)$, it is is also a Hilbert geodesic. The criterion for uniquenss of geodesics joining two points is the same as the one in the Euclidean case. More precisely, we have:

  \begin{proposition}[Hilbert geodesics] \label{prop:Hilbert-g}
 For any convex subset $\Omega$ of $\mathbb{H}^n$, the following holds:
 \begin{enumerate}
 \item the hyperbolic geodesic segments are Hilbert geodesics;
 \item the hyperbolic geodesics are the unique Hilbert geodesics joining their endpoints if and only if  there does not exist in $\partial \Omega$ two hyperbolic geodesic segments of nonempty interior which span a 2-dimensional totally geodesic subspace.
 \end{enumerate}
\end{proposition}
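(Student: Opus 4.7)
The plan is to leverage the definition $H(x,y)=\tfrac12\bigl(F(x,y)+F(y,x)\bigr)$ and reduce the entire statement to properties of the Funk metric already established in Proposition \ref{prop:geod-F} and in the triangle-inequality derivation preceding it. The key observation is that the bound $H(x,z)\le H(x,y)+H(y,z)$ was obtained by summing the two Funk inequalities
\[
F(x,z)\le F(x,y)+F(y,z),\qquad F(z,x)\le F(z,y)+F(y,x),
\]
so the Hilbert equality $H(x,z)=H(x,y)+H(y,z)$ is equivalent to the \emph{simultaneous} equalities of both Funk inequalities. From this, assertion (1) is immediate: if $y$ lies on the hyperbolic geodesic segment from $x$ to $z$, Proposition \ref{prop:geod-F}(1) gives both Funk equalities at once, hence the Hilbert equality.

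For the nontrivial direction of (2), I would argue by contrapositive. Assume $y$ is not on the hyperbolic geodesic joining $x$ and $z$, yet $H(x,y)+H(y,z)=H(x,z)$. Then by the equality analysis recalled before Proposition \ref{prop:geod-F}, there exist supporting hyperplanes $\pi_0\in\mathcal{P}(b(x,y))\cap\mathcal{P}(b(y,z))$ and $\pi_0'\in\mathcal{P}(b(y,x))\cap\mathcal{P}(b(z,y))$. Since a supporting hyperplane $\pi_0$ is totally geodesic and $\overline{\Omega}$ lies on one side, the hyperbolic geodesic segment joining $b(x,y)$ and $b(y,z)$ is contained in $\pi_0\cap\partial\Omega$; analogously $\pi_0'\cap\partial\Omega$ contains the segment joining $b(y,x)$ and $b(z,y)$. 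Both segments lie in the 2-dimensional totally geodesic plane $P$ determined by the non-collinear triple $\{x,y,z\}$. Moreover $\pi_0\neq\pi_0'$, for otherwise the common supporting hyperplane would contain both $b(x,y)$ and $b(y,x)$, forcing it to contain the whole line through $x,y$ and hence the interior points $x,y$ of $\Omega$, a contradiction. Two distinct hyperbolic geodesic segments in $P$ cannot lie on a single geodesic line, so they span $P$, contradicting the hypothesis.

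For the converse direction of (2), I would construct an explicit broken Hilbert geodesic whenever $\partial\Omega$ contains two hyperbolic geodesic segments $\sigma_1,\sigma_2$ of nonempty interior spanning a 2-dimensional totally geodesic plane $P$. Working inside the planar convex set $\Omega\cap P$, pick interior points $b_1,b_2$ of $\sigma_1$ and $b_1',b_2'$ of $\sigma_2$; by a continuity argument one can find a point $y\in\Omega\cap P$ and two distinct geodesic lines through $y$ whose extensions in $P$ meet $\partial(\Omega\cap P)$ precisely at these four points with $b_1,b_1'$ on one line and $b_2,b_2'$ on the other. Choosing $x$ on the first line between $y$ and $b_1'$, and $z$ on the second between $y$ and $b_2$, one gets $b(x,y)=b_1\in\sigma_1$, $b(y,z)=b_2\in\sigma_1$, $b(y,x)=b_1'\in\sigma_2$, $b(z,y)=b_2'\in\sigma_2$, so both Funk equalities hold via the supporting hyperplanes containing $\sigma_1,\sigma_2$, and $x\to y\to z$ is a non-straight Hilbert geodesic.

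The main obstacle is this last planar construction: verifying that $y$ and the two lines can genuinely be chosen so that all four extended rays land in the interiors of $\sigma_1$ and $\sigma_2$ with the correct pairing. I expect this to reduce to a standard openness argument using that each $\sigma_i$ has nonempty interior and the map from a parameter (position of $y$ plus directions of the two lines through $y$) to the four boundary hit-points is continuous on the open set where all four rays do meet $\sigma_1\cup\sigma_2$.
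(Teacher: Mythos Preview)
Your argument for part (1) and the reduction of the Hilbert equality to the two simultaneous Funk equalities is exactly what the paper does in the paragraph preceding the proposition.

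For part (2) the paper does not actually give a proof: it simply states that ``the criterion for uniqueness of geodesics joining two points is the same as the one in the Euclidean case'' and thereby defers to \cite{PT1}. Your proposal is therefore more detailed than the paper's own treatment, and the forward (uniqueness) direction is essentially correct. One sentence is imprecise, however: ``Two distinct hyperbolic geodesic segments in $P$ cannot lie on a single geodesic line'' is false as a general statement. What you need (and what is true) is that \emph{these particular} segments cannot be collinear: if the segment $[b(x,y),b(y,z)]$ and the segment $[b(y,x),b(z,y)]$ lay on a common geodesic $\ell\subset P$, then $\ell$ would contain $b(x,y)$ and $b(y,x)$, hence coincide with the line through $x,y$, and likewise with the line through $y,z$, forcing $x,y,z$ to be collinear. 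This is the argument you should insert in place of the quoted sentence; the fact that $\pi_0\neq\pi_0'$ alone does not prevent the two segments from lying on a common line.

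For the converse direction your planar construction is workable but, as you yourself flag, the verification that two secants through a common interior point $y$ can be arranged to hit the interiors of $\sigma_1$ and $\sigma_2$ in the correct pattern is genuinely fiddly in the hyperbolic plane (two lines need not meet). A cleaner route, fully in the spirit of the paper, is to pass to the Beltrami--Klein model of the $2$-plane $P$: the convex set $\Omega\cap P$ becomes a Euclidean convex set, hyperbolic geodesics become Euclidean segments, and by Corollary~\ref{co:h} the hyperbolic Hilbert metric is carried isometrically to the Euclidean Hilbert metric. The converse then follows directly from the Euclidean statement in \cite{PT1}, which is precisely what the paper intends when it says the criterion ``is the same as in the Euclidean case''.
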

 
\subsection{The Hilbert metric in $S^n$}

We now consider the spherical counterpart. We define the spherical  Hilbert metric $H(x,y)$ as an arithmetic symmetrization of the  spherical Funk metric. We obtain a formula which is analogous to the  formula of the hyperbolic Hilbert metric, except that in
the formula for the spherical case, one repplaces the sinh function by the sine function. 
We assume that the convex set $\Omega$ is contained in an open hemisphere. Unlike the case of the Funk spherical metric, we do not need any more that $\Omega$ be contained in a sphere of diameter $<\pi_2$, for the value $H(x,y)$ is always nonnegative.

\begin{definition}[The spherical Hilbert metric]
For $x$ and $y$ in $\Omega$, the spherical Hilbert metric is defined by the formula
\begin{eqnarray*} 
H(x, y) &= & \frac12 \Big( F(x, y) + F(y,x) \Big) \\ &=& \frac12  \log \Big( \frac{\sin d(x, b(x, y))}{\sin d(y, b(x, y))} \cdot
\frac{\sin d(y, b(y, x))}{\sin d(x, b(y, x))} \Big)
\end{eqnarray*}
\end{definition}

Proposition \ref{prop:Hilbert-g} also holds in the spherical case, with spherical geodesics replacing hyperbolic geodesics in the statement.

\subsection{Cross ratio on $S^n$ and ${\mathbb H}^n$ \label{crossratio} }

Having introduced the Hilbert metrics on ${\mathbb H}^n$ and $S^n$, we note  that the quantities inside the logarithm are called  cross ratio and that they encode a projective geometric information, which relates the three geometries of ${\mathbb R}^n$, $S^n$ and $\mathbb{H}^n$.  We shall explain this in detail.  We remark here that even though one can define {\it signed} cross ratio by considering orientation of the geodesic segments,  in this article we will be concerned only with  {\it unsigned} cross ratio, meaning that its values are always positive. 

We start by recalling some classical facts. In the Euclidean plane ${\mathbb R}^2$, consider four  ordered distinct lines $l_1, l_2, l_3, l_4$ in the plane that are concurrent at a point $A$ and let $l$   be a  line that intersects these four lines at points $A_1, A_2, A_3, A_4$ respectively. Then it is well known that the cross ratio $[A_2, A_3, A_4, A_1]$ of the ordered quadruple $A_1, A_2, A_3, A_4$ does not depend on the choice of the line $l'$. We say that the cross ratio is a \emph{projectivity invariant}. Thus it follows that the projective transformations of $\mathbb{R}^n$,  leaving  a convex set $\Omega$ invariant,  are isometries of the Hilbert metric of $\Omega$.

As a matter of fact, Menelaus (Alexandria, 2nd century A.D.)  considered the above property not only on the Euclidean plane, but also on the sphere, where the lines  are the spherical geodesics, which are the great circles of the sphere.  Once there is a parallel between the Euclidean geometry and the spherical geometry, it is natural to expect to have the corresponding statement for the hyperbolic geometry.  We now define the cross ratio for the three geometries.  
\medskip

 \begin{definition}  Consider a geodesic line in Euclidean, hyperbolic and spherical geometry respectively, and let 
  $A_1, A_2, A_3, A_4$ be four ordered pairwise distinct points on that line.
 We define the cross ratio $[A_1, A_2, A_3, A_4]$, 
 in the Euclidean case, by:
 \[
 [A_2, A_3, A_4, A_1]_e :=\frac{A_2 A_4}{A_3 A_4}\cdot  \frac{A_3 A_1}{A_2 A_1},
 \]
 in the hyperbolic case, by:
 \[
[A_2, A_3, A_4, A_1]_h := \frac{\sinh A_2 A_4}{\sinh A_3 A_4} \cdot \frac{\sinh A_3 A_1}{\sinh A_2 A_1},
 \]
and in the spherical case, by:
 \[
[A_2, A_3, A_4, A_1]_s := \frac{\sin A_2 A_4}{\sin A_3 A_4}\cdot  \frac{\sin A_3 A_1}{\sin A_2 A_1},
 \]
where $A_i A_j$ stands for the distance between the pair of points $A_i$ and $A_j$, which is equal to the length of the line segment joining them.  (For this, we shall assume that in the case of spherical geometry the four points lie on a hemisphere;  instead, we could work in the elliptic space, that is, the quotient of the sphere by its canonical involution.)
 \end{definition}

In the Appendix, we will present a proof of the fact that  the quantity 
\[
 \frac{\sinh d(x, b(x, y))}{\sinh d(y, b(x, y))} \cdot 
\frac{\sinh d(y, b(y, x))}{\sinh d(x, b(y, x))}
\]
is a projective invariant in ${\mathbb H}^n$.  However, we now give another proof of the projective invariance using the geometry of the ambient space ${\mathbb R}^{n+1}$ for the projective model.

\subsection{Perspectivity and Hilbert metric isometries}\label{perspectivity}

We denote by $U^n$ the open upper hemisphere of $S^n$ equipped with the induced metric. Let $X$ and $X'$ belong to the set $\{\mathbb{R}^n, \mathbb{H}^n, U^n\}$.  We now define a category of maps, which provides isometries with respect to the Hilbert geometries.

\begin{definition} A map $P: X\to X'$ is a \emph{perspectivity}, or a \emph{perspective-preserving transformation} if it preserves geodesics and if it preserves the cross ratio of quadruples of points on geodesics. 
\end{definition}

(We note that these are classical terms, see e.g. Hadamard \cite{Had} or Busemann 
\cite{Busemann1953}. We also note that such maps arise indeed in perspective drawing.) The 
obvious examples of perspectivities are the projective transformations of ${\mathbb R}^n$ to itself.
In what follows, using well-known projective models in $\mathbb{R}^{n+1}$ of hyperbolic space $\mathbb{H}^n$ and of the sphere $S^n$, we define natural homeomorphisms between $\mathbb{R}^n$, $\mathbb{H}^n$ and the open upper hemisphere of $S^n$ which are perspective-preserving transformations. The proofs are elementary and  are based on first principles of geometry.  

In \cite{PY2}, we showed that these three cross-ratios are the manifestation of the same entity; they are obtained from each other via projection maps between familiar representatives of the three geometries in $\mathbb{R}^{n+1}$. More specifically, the sphere $S^n$ is the set of unit vectors in ${\mathbb R}^{n+1}$ with respect to the Euclidean norm 
\[
\|x\|_e^2:=x_1^2 + \cdots + x_n^2 + x_{n+1}^2 = 1
\]
and the hyperbolic space ${\mathbb H}^n$ is the set of ``vectors of imaginary norm $i$" with $x_{n+1}>0$ in  ${\mathbb R}^{n+1}$ with respect to the Minkowski norm 
\[
\|x\|_m^2:= x_1^2 + \cdots  + x_n^2 - x_{n+1}^2 = -1
\]
These models of the two constant curvature spaces are called ``projective" for the geodesics in the curved spaces are realized as the intersection of the unit sphere with the two-dimensional subspace of ${\mathbb R}^{n+1}$ through the origin of this space.  

 Let $P_s$ be the  projection map from the origin of $ {\mathbb R}^{n+1}$ sending  the hyperplane $\{ x_{n+1} = 1 \} \subset {\mathbb R}^{n+1}$ onto the open upper hemisphere $U^n$ of $S^n$.
 
 Let  $P_h$ be the  projection map  from the origin of $ {\mathbb R}^{n+1}$ of the unit disc of the hyperplane $\{ x_{n+1} = 1 \} \subset {\mathbb R}^{n+1}$ onto the hyperboloid ${\mathbb H}^n \subset {\mathbb R}^{n+1}$.  
 
In \cite{PY2}, we proved the following:

\begin{theorem}[Spherical Case] \label{th:s}
The map $P_s$ is a perspectivity.  In particular, the projection map $P_s$ preserves the values of the cross ratio; namely for a set of  four ordered pairwise distinct points
  $A_1, A_2, A_3, A_4$ aligned on a great circle in the upper hemisphere, 
  we have \[ [P_s(A_2),P_2(A_3),P_s(A_4),P_s(A_1)]_s=[A_2, A_3, A_4, A_1]_e.\]
\end{theorem}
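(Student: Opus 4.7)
The plan is to exploit the fact that $P_s$ is the central projection from the origin $O \in \mathbb{R}^{n+1}$, and to reduce everything to a two-dimensional computation. Fix a line $\ell$ in the hyperplane $\{x_{n+1}=1\}$ containing the four points $A_1,A_2,A_3,A_4$. The $2$-plane $\Pi \subset \mathbb{R}^{n+1}$ spanned by $O$ and $\ell$ meets $U^n$ in an open half of a great circle, and $P_s(\ell)$ is precisely that arc. This alone shows that $P_s$ sends affine lines of $\{x_{n+1}=1\}$ to spherical geodesics, and reduces the cross-ratio computation to the planar picture inside $\Pi$.

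Within $\Pi$, I would introduce an angular coordinate. Let $M$ be the foot of the perpendicular from $O$ to $\ell$, set $h := |OM| \geq 1$, and for each point $A_i$ let $\alpha_i$ denote the signed angle $\angle MOA_i$. Since every $A_i$ lies in $\{x_{n+1}=1\}$, one has $|\alpha_i| < \pi/2$ and hence $\cos\alpha_i > 0$. Two elementary observations are then combined: from the right-angled triangle at $M$ one reads off $|A_iA_j| = h\,|\tan\alpha_i - \tan\alpha_j|$, and since $P_s(A_i)$ and $P_s(A_j)$ are unit vectors in $\Pi$ making angles $\alpha_i$ and $\alpha_j$ with $OM$, their spherical distance along the great circle is exactly $|\alpha_i - \alpha_j|$.

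Applying the identity
\[
\tan\alpha_i - \tan\alpha_j \;=\; \frac{\sin(\alpha_i - \alpha_j)}{\cos\alpha_i \cos\alpha_j}
\]
yields
\[
\frac{|A_iA_j|}{\sin d(P_s(A_i),\,P_s(A_j))} \;=\; \frac{h}{\cos\alpha_i\cos\alpha_j}.
\]
Substituting this into the definition of $[A_2,A_3,A_4,A_1]_e$, the factor $h$ appears twice in the numerator and twice in the denominator and cancels; each $\cos\alpha_i$ likewise appears an equal number of times in the top and the bottom and cancels. What survives is precisely $[P_s(A_2),P_s(A_3),P_s(A_4),P_s(A_1)]_s$. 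The only piece of care required is the bookkeeping in the cancellation of the $\cos\alpha_i$ factors, but this is transparent once the four ratios are written out; signs cause no difficulty because the cross ratios under consideration are unsigned, so every trigonometric expression is handled up to absolute value, and no degenerate configuration occurs as long as $O \notin \ell$, which is automatic here.
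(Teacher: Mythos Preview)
Your argument is correct and is essentially the same as the paper's: both reduce to the $2$-plane through the origin and the line $\ell$, and both exhibit a factorization of the ratio $\dfrac{|A_iA_j|}{\sin d(P_s(A_i),P_s(A_j))}$ as a product of terms each depending on a single point, so that everything cancels in the cross ratio. The only cosmetic difference is that the paper writes this factorization as $\sin d([u],[v]) = \dfrac{\|u-v\|}{\|u\|\,\|v\|}$ via $\sqrt{\|u\|^2\|v\|^2-(u\cdot v)^2}$, while you parametrize by the angle $\alpha_i$ and use the tangent subtraction formula; since $\|A_i\|=h/\cos\alpha_i$, these are the same identity.
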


As the proof is short and elementary, we will include it. 

\begin{proof} 
Let $u, v$ be the two points on the hyperplane $\{x_{n+1} =1\}$ and $P_s(u)=:[u], P_s(v)=:[v]$ be the points in $U$, and $d([u], [v])$ be the spherical distance between them. Let $\|x\|$ be the Euclidean norm of the vector $x \in {\mathbb R}^{n+1}$. We show that 
\[
\sin d( [u], [v] ) = \frac{\|u-v\|}{\|u\|\|v\|}.
\]
This follows from the following trigonometric relations:
\begin{eqnarray*}
\sin d( [u], [v] ) & = & \sin \Big[ \cos^{-1} \Big( \frac{u}{\|u\|} \cdot  \frac{v}{\|v\|} \Big)\Big]  \\
 & = &  \sqrt{1 - \cos^2 \Big[ \cos^{-1} \Big( \frac{u}{\|u\|} \cdot  \frac{v}{\|v\|} \Big) \Big]} \\
 & = & \sqrt{1- ( \frac{u}{\|u\|} \cdot  \frac{v}{\|v\|} \Big)^2}   
 \\
 & = &  \frac{1}{\|u\| \|v\|} \sqrt{\|u\|^2 \|v\|^2 - (u \cdot v)^2}  \\
 & = &  \frac{1}{\|u\| \|v\|} \times (\mbox{area of parallelogram spanned by $u$ and $v$}) \\
 & = & \frac{\|u-v\|}{\|u\| \|v\|}.
\end{eqnarray*} 

Now  for a set of  four ordered pairwise distinct points
  $A_1, A_2, A_3, A_4$ aligned on a great circle in the upper hemisphere, their spherical cross ratio $[A_2, A_3, A_4, A_1]_e$ is equal to the Euclidean cross ratio $[P_s(A_2), P_s(A_3), P_s(A_4), P_s(A_1)]_s$;
\[
\frac{ \sin d( [A_2], [A_4] )}{\sin d([A_3], [A_4])} \cdot \frac{ \sin d( [A_3], [A_1] )}{\sin d([A_2], [A_1])} 
= \frac{\frac{\|A_2-A_4\|}{\|A_2\| \|A_4\|}}{ \frac{\|A_3-A_4\|}{\|A_3\| \|A_4\|} } \cdot
\frac{\frac{\|A_3-A_1\|}{\|A_3\| \|A_1\|}}{ \frac{\|A_2-A_1\|}{\|A_2\| \|A_1\|} }  
= \frac{\|A_2-A_4\|}{\|A_3 - A_4\|} \cdot \frac{\|A_3-A_1\|}{\|A_2 - A_1\|}
\]

\end{proof}

\begin{theorem}[Hyperbolic Case] \label{th:h}
The map $P_h$ is a perspectivity. In particular, the projection map $P_h$ preserves cross ratios, namely for a set of  four ordered pairwise distinct points
  $A_1, A_2, A_3, A_4$ aligned on a geodesic in the upper hyperbolid, we have
  \[ [P_h(A_2),P_h(A_3),P_h(A_4),P_h(A_1)]_h=[A_2, A_3, A_4, A_1]_e.\]  
\end{theorem}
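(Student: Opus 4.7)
The plan is to follow the template of the spherical case (Theorem \ref{th:s}), replacing the sphere formula with the analogous hyperboloid formula. The central task is to establish a clean expression for $\sinh d_h([u],[v])$ in terms of the Euclidean data of the representatives $u,v$ in the disc $\{x_{n+1}=1\}$; from that, the invariance of cross ratios falls out by cancellation, just as $\|u\|\|v\|$ cancelled pairwise in the spherical computation.

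Concretely, I first recall the well-known identity for the hyperboloid model,
\[
\cosh d_h([u],[v]) \;=\; -\langle [u],[v]\rangle_m \;=\; \frac{1 - u\cdot v}{\sqrt{(1-|u|^2)(1-|v|^2)}},
\]
for $u,v$ in the open unit disc of $\{x_{n+1}=1\}$ (where $\cdot$ and $|\,\cdot\,|$ are Euclidean, referring only to the first $n$ coordinates, and $\langle\cdot,\cdot\rangle_m$ is the Minkowski form). Squaring and using $\sinh^2=\cosh^2-1$ gives
\[
\sinh^2 d_h([u],[v]) \;=\; \frac{|u-v|^2 - \bigl(|u|^2|v|^2 - (u\cdot v)^2\bigr)}{(1-|u|^2)(1-|v|^2)}.
\]
Next I restrict to the configuration of interest: the four $A_i$ lie on a common affine line $L \subset \{x_{n+1}=1\}$ inside the unit disc. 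Parametrize $L$ as $A(s)=p+sw$, where $p$ is the foot of the perpendicular from the origin and $w$ is a unit vector orthogonal to $p$. A direct expansion then yields the key simplification
\[
|A(s)|^2|A(t)|^2 - (A(s)\cdot A(t))^2 \;=\; |p|^2 (s-t)^2,
\]
so the numerator in the expression for $\sinh^2 d_h$ reduces to $(1-|p|^2)(s-t)^2$, giving the compact formula
\[
\sinh d_h([A(s)],[A(t)]) \;=\; \frac{\sqrt{1-|p|^2}\,|A(s)-A(t)|}{\sqrt{(1-|A(s)|^2)(1-|A(t)|^2)}}.
\]

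With this formula in hand, I assemble the hyperbolic cross ratio $[P_h(A_2),P_h(A_3),P_h(A_4),P_h(A_1)]_h$ as a product of four such ratios. The common factor $\sqrt{1-|p|^2}$ appears once in each numerator and once in each denominator, so it drops out. The denominators $\sqrt{1-|A_i|^2}$ appear with matching positive and negative powers across the four factors (each index $i$ shows up twice above and twice below), so they too cancel. What remains is precisely
\[
\frac{|A_2-A_4|}{|A_3-A_4|}\cdot\frac{|A_3-A_1|}{|A_2-A_1|} \;=\; [A_2,A_3,A_4,A_1]_e,
\]
proving the cross-ratio equality. The preservation of geodesics by $P_h$ is automatic in the projective model, since geodesics on the hyperboloid are exactly intersections with $2$-planes through the origin, which are the central projections of affine lines in $\{x_{n+1}=1\}$; combined with cross-ratio invariance this shows $P_h$ is a perspectivity.

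The only nontrivial step is the algebraic identity $|A(s)|^2|A(t)|^2-(A(s)\cdot A(t))^2 = |p|^2(s-t)^2$, which I expect to be the main computational hurdle; all subsequent cancellations are bookkeeping, in complete parallel with the proof of Theorem \ref{th:s}.
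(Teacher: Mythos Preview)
Your proof is correct and runs in parallel with the paper's: both express $\sinh d_h([u],[v])$ in terms of the affine representatives $u,v$ in the hyperplane $\{x_{n+1}=1\}$ and then observe that in the fourfold product defining the cross ratio every factor except the Euclidean distances $|A_i-A_j|$ cancels. The execution differs. The paper computes entirely with the Minkowski form on $\mathbb{R}^{n,1}$ and passes through the geometric interpretation of $\sqrt{\|u\|^2\|v\|^2-(u\cdot v)^2}$ as (an imaginary multiple of) the area of the parallelogram spanned by $u$ and $v$, arriving at the compact expression $\sinh d([u],[v])=-\|u-v\|/(\|u\|\,\|v\|)$; from there the cancellation is immediate, word for word as in the spherical case. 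You instead stay in Euclidean coordinates on the Klein disc, expand $\sinh^2 d$ directly, and then specialize to a parametrized line $A(s)=p+sw$ (with $p\perp w$) in order to factor the numerator via the identity $|A(s)|^2|A(t)|^2-(A(s)\cdot A(t))^2=|p|^2(s-t)^2$.

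Your route is slightly longer but has the merit of making the line-dependent constant $\sqrt{1-|p|^2}$ explicit and of showing by inspection that it is common to all four $\sinh$-terms and therefore cancels. The paper's Minkowski-norm packaging absorbs this constant into the ``area'' step, which keeps the formula visually identical to the spherical one; your unpacked computation trades that elegance for a fully transparent cancellation. Both arguments rest on the same mechanism and yield the same identity $[P_h(A_2),P_h(A_3),P_h(A_4),P_h(A_1)]_h=[A_2,A_3,A_4,A_1]_e$.
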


We quote the proof in \cite{PY2}. 
\begin{proof}
We follow the spherical case, where the sphere of the unit radius in ${\mathbb R}^{n+1}$  is replaced by the upper sheet of the sphere of radius $i$, namely the hyperboloid in ${\mathbb R}^{n, 1}$.  Let $u, v$ be the two points on the hyperplane $\{x_{0} =1\}$.  and $P_h(u)=:[u], P_h(v)=:[v]$ be the points in the hyperboloid, or, equivalently, the time-like vectors of unit (Minkowski) norm.   Denote by $d([u], [v])$ the hyperbolic length between the points and let $\|x\|$ be the Minkowski norm of the vector $x \in {\mathbb R}^{n,1}$.  We will show that 
\begin{eqnarray*}
\sinh d([u], [v]) = - \frac{\|u-v \|}{\|u\| \|v\|}. 
\end{eqnarray*}
Note that the number on the right hand side is positive, for $\|u\|, \|v\|$ are positive imaginary numbers, and $u-v$ is a purely space-like vector, on which the Minkowski norm of ${\mathbb R}^{n, 1}$ and the Euclidean norm of ${\mathbb R}^n$ coincide.  

This follows from the following trigonometric relations
\begin{eqnarray*}
\sinh d( [u], [v] ) & = & \sinh \Big[ \cosh^{-1} \Big( \frac{u}{\|u\|} \cdot  \frac{v}{\|v\|} \Big)\Big] 
\\
 & = &  \sqrt{\cosh^2 \Big[ \cosh^{-1} \Big( \frac{u}{\|u\|} \cdot  \frac{v}{\|v\|} \Big) \Big] -1 } \\
 & = & \sqrt{\Big( \frac{u}{\|u\|} \cdot  \frac{v}{\|v\|} \Big)^2 -1 }  
\\
 & = &  \frac{\sqrt{-1}}{\|u\| \|v\|} \sqrt{\|u\|^2 \|v\|^2 - (u \cdot v)^2 }   \\
 & = &  \frac{ \sqrt{-1}}{\|u\| \|v\|} \sqrt{-1}\times (\mbox{area of parallelogram spanned by $u$ and $v$}) \\
 & = & - \frac{\|u-v\|}{\|u\| \|v\|}.
\end{eqnarray*} 
The formula \[\sqrt{\|u\|^2 \|v\|^2 - (u \cdot v)^2 }  =  \sqrt{-1}\times (\hbox{area of parallelogram spanned by } u \hbox{ and } v) \]
 is as stated in Thurston's notes (Section 2.6 \cite{Th}). Now by the same argument as in the spherical case, the hyperbolic cross ratio $[A_2, A_3, A_4, A_1]_e $ is equal to the Euclidean cross ratio $[P(A_2),P(A_3),P(A_4),P(A_1)]_h$.
\end{proof}

In the second paragraph of \S \ref{crossratio}, we have already referred to the notion of projectivity invariance for the cross ratio in Euclidean space. We extend this notion tp the cross ratio in the spherical and hyperbolic spaces, by using the same definition. In the case of the sphere, we  restrict to configurations where all the points considered are contained in an open hemisphere. From this we have easily the following classical result, going back to Menelaus in the spherical case:
\begin{corollary}\label{cr.proj}
The spherical and hyperbolic cross ratios are projectivity invariants.
\end{corollary}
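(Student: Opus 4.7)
The plan is to reduce both the spherical and hyperbolic statements to the classical projective invariance of the Euclidean cross ratio (already noted in the Euclidean paragraph of \S\ref{crossratio}), using Theorems \ref{th:s} and \ref{th:h} as the bridge. Concretely, given four geodesics concurrent at a point in $S^n$ (respectively $\mathbb{H}^n$) and two transverse geodesics cutting them in quadruples $(A_1,A_2,A_3,A_4)$ and $(A'_1,A'_2,A'_3,A'_4)$, I would show that the corresponding spherical (respectively hyperbolic) cross ratios agree; this is ``projectivity invariance'' in the sense the authors declare.

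First I would treat the spherical case. I apply $P_s^{-1}$ to the whole configuration in $U^n$. The key observation is that $P_s$ carries great circles in the open upper hemisphere to Euclidean lines in the affine hyperplane $\{x_{n+1}=1\}$, because both are cut out by two-dimensional linear subspaces of $\mathbb{R}^{n+1}$ through the origin. Therefore concurrency of geodesics is preserved, and transversals map to transversals. Theorem \ref{th:s} applied to each of the two quadruples gives
\[
[A_2,A_3,A_4,A_1]_s = [P_s^{-1}(A_2),P_s^{-1}(A_3),P_s^{-1}(A_4),P_s^{-1}(A_1)]_e,
\]
and similarly for the primed quadruple. The classical projective invariance of the Euclidean cross ratio (the Menelaus-type statement recalled earlier) equates the two Euclidean cross ratios, hence the two spherical ones.

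The hyperbolic case follows the same scheme using $P_h$ and Theorem \ref{th:h}: $P_h^{-1}$ sends hyperbolic geodesics in $\mathbb{H}^n$ to (open) chords of the unit disc in $\{x_{n+1}=1\}$, so concurrent hyperbolic geodesics pull back to concurrent Euclidean lines and transversals pull back to transversals. Theorem \ref{th:h} turns the hyperbolic cross ratios on each transversal into Euclidean cross ratios, and invariance downstairs is then inherited upstairs.

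The only bookkeeping to check is that the configurations remain in the domains where $P_s^{-1}$ and $P_h^{-1}$ are bijective with the required properties: for the sphere, staying inside an open hemisphere (which is the standing assumption for the spherical cross ratio), and for the hyperboloid, landing in the open unit disc, which is automatic from the definition of $P_h$. I do not foresee any genuinely new geometric difficulty; the whole content of the corollary is that the projective invariance of the curved cross ratios is inherited from the flat one precisely because of the projective nature of the models furnished by $P_s$ and $P_h$.
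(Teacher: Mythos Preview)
Your proposal is correct and follows essentially the same route as the paper: the authors' proof consists of the single sentence that the corollary ``follows from the facts that the projection map $P_s$ and $P_h$ are both perspective-preserving transformations, and that cross ratio in the Euclidean space is a projectivity invariant,'' which is exactly the reduction you carry out in detail via Theorems~\ref{th:s} and~\ref{th:h}. Your additional remarks (that $P_s^{-1}$ and $P_h^{-1}$ send geodesics to straight lines because both are traces of linear $2$-planes through the origin, and the domain checks) simply make explicit what the paper leaves implicit.
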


 This follows from the facts that the projection map $P_s$ and $P_h$ are both perspective-preserving transformations, and that cross ratio in the Euclidean space is  a projectivity invariant. 
 
 By translating these results in the language of Hilbert geometry, we have:

\begin{corollary}[Spherical Case]\label{co:s}
Consider an open convex set $\Omega$ in $S^n \subset {\mathbb R}^{n+1}$ contained in the upper hemisphere $\{\|x\|_e^2 =1, x_{n+1}>0\}$, and let $H$ be the spherical Hilbert metric of $\Omega$.  Let $\tilde{\Omega}$ be the image of  projection $P$ of $\Omega$ from the origin onto the hyperplane $\{ x_{n+1} = 1 \}$, an open convex set in the plane with its Euclidean  Hilbert metric $\tilde{H}$.  Then the projection map $P: (\Omega, H) \rightarrow (\tilde{\Omega}, \tilde{H})$ is an isometry.   
\end{corollary}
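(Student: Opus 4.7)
The plan is to reduce the statement to the identification of the spherical Hilbert distance with the logarithm of a spherical cross ratio, and then to invoke Theorem \ref{th:s} to transport this cross ratio back to a Euclidean cross ratio on $\tilde{\Omega}$. Explicitly, the map $P: \Omega \to \tilde{\Omega}$ is the set-theoretic inverse (restricted to $\Omega$) of the perspectivity $P_s$ from the hyperplane $\{x_{n+1}=1\}$ onto the upper hemisphere $U^n$ studied in Theorem \ref{th:s}. Since $P_s$ sends Euclidean lines to great-circle arcs (each Euclidean line in $\{x_{n+1}=1\}$ spans, together with the origin of $\mathbb{R}^{n+1}$, a $2$-plane whose intersection with $S^n$ is a great circle), its inverse $P$ sends great-circle arcs in $U^n$ to Euclidean line segments. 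In particular, $\tilde{\Omega} = P(\Omega)$ is a convex subset of the affine hyperplane $\{x_{n+1}=1\}$.

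Next, I would argue that $P$ intertwines the boundary-hitting construction used to define the two Hilbert metrics. Since $P$ is a homeomorphism between the hemisphere and the hyperplane that sends geodesics to geodesics, it maps $\partial \Omega$ homeomorphically onto $\partial \tilde{\Omega}$, and for every pair of distinct points $x, y \in \Omega$ the image under $P$ of the ray-boundary intersection $b(x,y) \in \partial \Omega$ is the corresponding ray-boundary intersection $P(b(x,y)) \in \partial \tilde{\Omega}$ of the Euclidean ray from $P(x)$ through $P(y)$; similarly for $b(y,x)$.

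Now I would simply rewrite both Hilbert distances as cross ratios and compare. By definition,
\[
H(x,y) \;=\; \frac{1}{2}\log\Bigl(\frac{\sin d(x,b(x,y))}{\sin d(y,b(x,y))}\cdot\frac{\sin d(y,b(y,x))}{\sin d(x,b(y,x))}\Bigr) \;=\; \frac{1}{2}\log\,[x,y,b(x,y),b(y,x)]_s,
\]
where the four points are collinear on a great circle in $U^n$. Analogously,
\[
\tilde{H}(P(x),P(y)) \;=\; \frac{1}{2}\log\,[P(x),P(y),P(b(x,y)),P(b(y,x))]_e.
\]
By Theorem \ref{th:s}, $P_s$ preserves the cross ratio of any four collinear points, hence so does its inverse $P$, and the two cross ratios coincide. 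Therefore $H(x,y) = \tilde{H}(P(x),P(y))$, which is the isometry claim.

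The main obstacle, in my view, is not computational but organizational: one must make sure the perspectivity property genuinely translates between the two boundary operators, i.e.\ that $b_{\tilde \Omega}(P(x),P(y)) = P(b_\Omega(x,y))$, and that the spherical Hilbert formula matches the spherical cross ratio with the correct ordering of the four points. Both points reduce to the fact that $P$ is a geodesic-preserving homeomorphism sending $\Omega$ onto $\tilde{\Omega}$, together with the symmetry of the cross ratio under the defining permutation. Once these are checked, Theorem \ref{th:s} does all the real work.
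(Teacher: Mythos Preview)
Your proposal is correct and is exactly the argument the paper intends: the corollary is stated immediately after Theorem~\ref{th:s} with the phrase ``By translating these results in the language of Hilbert geometry,'' and your proof is simply the explicit unpacking of that translation---identify both Hilbert distances as logarithms of cross ratios, match the boundary points via the geodesic-preserving bijection $P=P_s^{-1}$, and apply the cross-ratio preservation of Theorem~\ref{th:s}. The only additional content you supply, namely the verification that $P$ intertwines the boundary-hitting maps $b_\Omega$ and $b_{\tilde\Omega}$, is left implicit in the paper but is a necessary and correctly handled detail.
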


\begin{corollary}[Hyperbolic Case]\label{co:h}
Consider an open convex set $\Omega$ in ${\mathbb H}^n \subset {\mathbb R}^{n, 1}$ where the inclusion is the isometric embedding of ${\mathbb H}^n$ as the hyperboloid $\{ \|x\|_m^2 = -1 \}, x_{0} >0\}$, and let $H$ be the hyperbolic Hilbert metric of $\Omega$.  Let $\tilde{\Omega}$ be the image of  projection $P$ of $\Omega$ from the origin onto the hyperplane $\{ x_{0} = 1 \}$, an open convex set in the plane with its Euclidean  Hilbert metric $\tilde{H}$.  Then the projection map $P: (\Omega, H) \rightarrow (\tilde{\Omega}, \tilde{H})$ is an isometry.   
\end{corollary}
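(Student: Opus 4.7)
The plan is to recognize that the map $P$ in the statement is (up to identification) the inverse of the perspectivity $P_h$ from Theorem \ref{th:h}, and then to deduce the Hilbert isometry property from the cross-ratio preservation already established for $P_h$. Central projection from the origin of $\mathbb{R}^{n,1}$ is a homeomorphism between the hyperboloid and the hyperplane $\{x_0=1\}$ (restricted to the relevant regions), and it carries two-dimensional linear subspaces through the origin to themselves. Since hyperbolic geodesics on $\mathbb{H}^n$ are precisely the intersections of such two-planes with the hyperboloid, and Euclidean lines in $\{x_0=1\}$ are their intersections with that hyperplane, $P$ sends hyperbolic geodesic segments in $\Omega$ to Euclidean line segments in $\tilde{\Omega}$ and, as a homeomorphism of closures, sends $\partial\Omega$ onto $\partial\tilde{\Omega}$. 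In particular, $\tilde{\Omega}$ is convex.

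Fix $x,y\in\Omega$ and let $b_1=b(x,y)$, $b_2=b(y,x)$ be the two intersections of the hyperbolic geodesic through $x,y$ with $\partial\Omega$. By the preceding paragraph, $P(b_1)$ and $P(b_2)$ are the two intersections of the Euclidean line through $P(x),P(y)$ with $\partial\tilde{\Omega}$; moreover, since $P$ preserves the order of points on a geodesic, $P(b_1)=\tilde{b}(P(x),P(y))$ and $P(b_2)=\tilde{b}(P(y),P(x))$, where $\tilde{b}$ denotes the boundary-point map for $\tilde{\Omega}$.

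By definition,
\[
H(x,y)=\frac{1}{2}\log\!\left[\frac{\sinh d(x,b_1)}{\sinh d(y,b_1)}\cdot\frac{\sinh d(y,b_2)}{\sinh d(x,b_2)}\right],
\]
which is exactly $\tfrac{1}{2}\log[\,y,x,b_2,b_1\,]_h$ in the cross-ratio notation of Section \ref{crossratio}. Applying Theorem \ref{th:h} to the four collinear points $b_1,b_2,x,y$ yields
\[
[\,y,x,b_2,b_1\,]_h=[\,P(y),P(x),P(b_2),P(b_1)\,]_e,
\]
and the right-hand side is, by the same formula in the Euclidean setting, equal to $2\tilde{H}(P(x),P(y))$. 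Hence $H(x,y)=\tilde{H}(P(x),P(y))$, which is the claimed isometry.

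The only nontrivial point in this plan is bookkeeping: one must check that the cyclic ordering of the four points in the cross-ratio formula for $H$ corresponds under $P$ to the ordering used in the Euclidean formula for $\tilde{H}$, and that the choice of ``first'' boundary point along each oriented geodesic is preserved by $P$. This is immediate from the fact that central projection from the origin is order-preserving on each geodesic ray emanating from a point of $\Omega$, since the image of such a ray meets $\partial\tilde{\Omega}$ for the first time exactly where the original ray meets $\partial\Omega$. No genuine obstacle arises beyond this verification, because the analytic content — the equality between hyperbolic and Euclidean cross ratios under $P_h$ — has already been proved in Theorem \ref{th:h}.
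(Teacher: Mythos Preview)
Your proof is correct and follows exactly the route the paper intends: the corollary is stated immediately after Theorem~\ref{th:h} with the remark ``by translating these results in the language of Hilbert geometry,'' and your argument spells out precisely that translation (geodesics and boundary points are preserved by central projection, and the Hilbert metric is $\tfrac12\log$ of the relevant cross ratio). One tiny slip: where you write that the Euclidean cross ratio ``is equal to $2\tilde{H}(P(x),P(y))$,'' you mean $\exp\bigl(2\tilde{H}(P(x),P(y))\bigr)$; the conclusion $H=\tilde H$ then follows after taking $\tfrac12\log$ on both sides, as you do.
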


\subsection{On Hilbert's Problem IV}
We already mentioned Hilbert's Fourth Problem. Since the projection maps defined in \S \ref{perspectivity} between the  the Euclidean plane $\mathbb{R}^n$, the upper hemisphere $U^n\subset S^n$ and the hyperbolic plane $\mathbb{H}^n$ preserve convexity and send lines to lines, Hilbert's problem IV in the three spaces of constant curvature amount to one and the same problem.

\subsection{Projective transformations as isometries of Hilbert metrics}

We remark that by Corollaries \ref{co:s} and \ref{co:h} , which state that the projection maps $P_h$ and $P_s$ are perspectivities, each convex set $\Omega \subset \mathbb{H}^n$ corresponds to a convex set $P_h^{-1}(\Omega) \subset \mathbb {R}^n$ with the hyperbolic Hilbert metric $H_\Omega$ isometric to  the Euclidean Hilbert metric $H_{P_h^{-1}(\Omega)}$, and that each convex set $\Omega' \subset U^n \subset S^n$ corresponds to a convex set $P_s^{-1}(\Omega')$  with $H_{\Omega'}$ isometric to $H_{P_s^{-1}(\Omega')}$.   We also  note that although the set $P_h^{-1}(\Omega)$ is always properly contained in the unit ball in ${\mathbb R}^n$ by construction,  by recalling that Euclidean homotheties are isometries of Hilbert metrics,  the size of   $P_h^{-1}(\Omega)$ is irrelevant in characterizing the Hilbert metrics. 

As a simple application of this phenomenon, we note the following example of Hilbert geometry: 

\subsubsection{Hilbert metrics as models of the hyperbolic plane}
 Denote by  $\mathbb{D}^n_R \subset  \{x_{n+1} = 1\}$ a disk of radius $R>0$ centred at the origin of the Euclidean plane. By restricting the inverse map of  $P_s$  to the disk $\mathbb{D}_R$, we obtain a map from this disk into the sphere $S^n$.  For $R < 1$, by restricting the inverse map of $P_h$ to $\mathbb{D}_R$, we obtain a map from this disk into the hyperboloid/hyperbolic space $\mathbb{H}^n$.  These maps are  isometries for the Hilbert metrics as seen in Corollaries \ref{co:s} and \ref{co:h}. Since the disk $\mathbb{D}^n_R$ equipped with its Hilbert metric is a model of hyperbolic geometry (the Beltrami-Klein model), we obtain in this way new models of hyperbolic space which sit  in hyperbolic space and in the sphere respectively. We call such models \emph{generalized Beltrami-Klein models}, since they are defined using the spherical and the hyperbolic cross ratios respctively. Thus we identify, in each of the positively/negatively curved spaces, a nested family of geodesic balls as models of the hyperbolic space. Note that the limit of these models in $S^n$ and in $\mathbb{H}^n$ as the radius $R$ goes to $\pi/2$ and $\infty$ respectively, is the upper cap $U^n$ and the entire space $\mathbb{H}^n$ whose Hilbert metrics, if we define them by the formula we used for proper open convex subsets, are identically zero. \\

 An immediate consequence of the fact that  the generalized cross ratio is a projective invariant is that a projective transformation $\phi$ of ${\mathbb R}^n$, ${\mathbb H}^n$ or $S^n$ (see Corollary
 \ref{cr.proj} and Appendix) induces an isometry of the Hilbert metric $H_\Omega$ of a convex set $\Omega$, namely the following two spaces are isometric via $\phi$:
 \[
 (\Omega, H_\Omega) \cong^{\rm isom} (\phi( \Omega), H_{\phi (\Omega)})
 \]
 Here a projective transformation is a  diffeomorphism of the space form  ${\mathbb R}^n$, ${\mathbb H}^n$ or $S^n$, sending geodesics  to geodesics, induced by a linear map of the ambient space ${\mathbb R}^{n+1}$ of the projective models of the three spaces.   Every isometry of the space forms ${\mathbb R}^n$, ${\mathbb H}^n$ and $S^n$ is trivially a projective transformation.   

In ${\mathbb R}^n$, the set of linear  projective transformation is exactly the general linear group $\mathrm{GL}(n, {\mathbb R})$, a strictly larger set than the set of the linear isometries $\mathrm{O}(n)$.  

In hyperbolic geometry, however, each projective transformation is an isometry of the space. Indeed, a projective transformation of the hyperbolic space ${\mathbb H}^n$, sending each hyperbolic geodesic to a hyperbolic geodesic, is a map which sends a circle to a circle when the hyperbolic space is modeled on the open unit ball in ${\mathbb R}^n$. But such a map is known to be an isometry of ${\mathbb H}^n$.  Hence, 

\begin{theorem}\label{hyp.iso}
Given a convex set $\Omega$ in the hyperbolic space, a diffeomorphism $\phi$ is an isometry of the Hilbert metric, namely, $ (\Omega, H_\Omega) \cong^{\rm isom} (\phi( \Omega), H_{\phi (\Omega)})$ if and only if $\phi$ is a hyperbolic isometry.
\end{theorem}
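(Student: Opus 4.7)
I would prove the two implications separately, transferring the nontrivial direction to the Euclidean setting by means of Corollary \ref{co:h}.

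For $\Leftarrow$, suppose $\phi$ is a hyperbolic isometry. Then $\phi$ preserves the distance $d$, sends hyperbolic geodesics to hyperbolic geodesics, and maps $\partial \Omega$ onto $\partial \phi(\Omega)$. In particular, the ray $R(x,y)$ goes to $R(\phi(x),\phi(y))$, so that $\phi(b(x,y)) = b(\phi(x),\phi(y))$; since the Hilbert formula is assembled from expressions of the form $\sinh d(\cdot, b(\cdot,\cdot))$, the equality $H_{\phi(\Omega)}(\phi(x),\phi(y)) = H_\Omega(x,y)$ is immediate.

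For $\Rightarrow$, I would pass to the Klein disk. By Corollary \ref{co:h}, $P_h^{-1}$ is itself a Hilbert isometry from $(\Omega, H_\Omega)$ onto $(P_h^{-1}(\Omega), \tilde H)$, where $P_h^{-1}(\Omega)$ is a convex subset of the open unit ball $D^n \subset \mathbb{R}^n$ carrying its Euclidean Hilbert metric; the same holds for $\phi(\Omega)$. Hence the conjugate $\tilde \phi := P_h^{-1} \circ \phi \circ P_h$ is a Euclidean Hilbert isometry between two convex subsets of $D^n$. I would then invoke the classical characterization of such isometries (in the spirit of Birkhoff, de la Harpe, and others): every Hilbert isometry between open convex subsets of $\mathbb{R}^n$ is the restriction of a projective transformation $T$ of $\mathbb{R}P^n$. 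In the strictly convex case one could bypass this appeal by using Proposition \ref{prop:Hilbert-g}: Hilbert geodesics are then exactly line segments, so $\tilde \phi$ preserves collinearity and is classically projective. Extending $\tilde \phi$ continuously across the closures forces $T$ to send $\partial D^n$ to $\partial D^n$, and by the observation recalled just before the theorem, a projective transformation of $\mathbb{R}^n$ preserving the Klein sphere is precisely a hyperbolic isometry in the Klein model; transporting back via $P_h$ yields that $\phi$ itself is a hyperbolic isometry.

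\textbf{Main obstacle.} The delicate point is the invocation of the fundamental theorem asserting that Hilbert isometries between Euclidean convex sets are restrictions of projective transformations. Beyond strict convexity this statement is subtle and is where most of the real work resides. A closely related technical nuisance is to ensure that $T$, initially produced only on $P_h^{-1}(\Omega)$, genuinely preserves $\partial D^n$ rather than carrying it onto some other quadric of $\mathbb{R}P^n$; this relies on the fact that $\phi$ is a diffeomorphism of the convex sets together with the continuity of the Hilbert geometry up to the boundary.
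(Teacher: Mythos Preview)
Your reading of the statement is broader than the paper's. In context, the paragraph immediately preceding the theorem restricts attention to \emph{projective transformations} of $\mathbb{H}^n$, defined there as diffeomorphisms of the space form sending geodesics to geodesics and induced by a linear map of the ambient $\mathbb{R}^{n+1}$. The theorem is then the assertion that, unlike in $\mathbb{R}^n$ or $S^n$, this class coincides with the hyperbolic isometry group. Under that reading the paper's argument is one sentence: a projective transformation of $\mathbb{H}^n$, viewed in the Klein ball, is a projective transformation of $\mathbb{R}^n$ preserving the unit ball (equivalently, sending circles to circles), and such a map is classically a hyperbolic isometry. No classification of Hilbert isometries is invoked; the Hilbert metric plays no role in the $\Rightarrow$ direction beyond motivating why one cares about projective transformations.

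Your proposal instead attacks the genuinely stronger claim that an \emph{arbitrary} diffeomorphism $\phi:\Omega\to\phi(\Omega)$ which is a Hilbert isometry must be the restriction of a hyperbolic isometry. This is not what the paper proves, and your argument has two real gaps. First, the ``fundamental theorem'' you appeal to---that every Hilbert isometry between Euclidean convex domains is projective---is false in general: the Hilbert metric on a triangle (more generally a simplex) admits isometries that are not projective (de la Harpe). You note this yourself as the main obstacle, and it is fatal without further hypotheses on $\Omega$. Second, even if $\tilde\phi$ did extend to a projective map $T$ of $\mathbb{R}P^n$, you only control $T$ on the closure of $P_h^{-1}(\Omega)$, which is a proper subset of the Klein ball; nothing forces $T$ to carry $\partial D^n$ to itself. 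Your boundary argument would work if $\Omega=\mathbb{H}^n$, but then the Hilbert metric is identically zero and the statement is vacuous.

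So: for the theorem as the paper intends it, your machinery is unnecessary and the paper's one-line argument suffices; for the theorem as you read it, your proof is incomplete and the statement itself is likely false without extra hypotheses on $\Omega$.
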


In spherical geometry, on the other hand, each projective transformation of $S^n$ can be regarded as an element of the general linear group $\mathrm{GL}(n+1, {\mathbb R})$ as the transformation should send each great circle of $S^n$ to a great circle, which corresponds to a linear self map of ${\mathbb R}^{n+1}$, for  it sends each hyperplane containing the origin to another, where each of those hyperplanes corresponds to a great sphere in $S^n$.  Therefore we have the following; 

\begin{theorem} Given a convex set $\Omega$ in $S^n$, each element $\phi$ of $\mathrm{GL}(n+1, {\mathbb R})$ induces an isometry of the Hilbert metric $H_\Omega$;  $ (\Omega, H_\Omega) \cong^{\rm isom} (\phi( \Omega), H_{\phi (\Omega)})$.  
\end{theorem}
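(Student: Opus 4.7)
The strategy is to reduce the spherical case to the Euclidean case via the perspectivity introduced in Section \ref{perspectivity}. Given $\phi \in \mathrm{GL}(n+1,\mathbb{R})$, define the induced spherical map $\tilde{\phi}: S^n \to S^n$ by $\tilde{\phi}(x) = \phi(x)/\|\phi(x)\|_e$. The first step is to note that $\tilde{\phi}$ sends every great circle to a great circle: a great circle is $S^n$ intersected with a $2$-plane through the origin of $\mathbb{R}^{n+1}$, and $\phi$ maps such planes to planes of the same type. Consequently, $\tilde{\phi}$ sends the spherically convex set $\Omega$ to a spherically convex image $\tilde{\phi}(\Omega)$; after post-composing with a spherical isometry (which preserves the spherical Hilbert metric because it preserves lengths, hence cross ratios of collinear points), one may assume $\tilde{\phi}(\Omega)$ also lies in the open upper hemisphere $U^n$.

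Next, I would invoke Corollary \ref{co:s}: the projection $\Pi$ from the origin onto the hyperplane $\{x_{n+1}=1\}$ is an isometry from the spherical Hilbert metric on $\Omega$ to the Euclidean Hilbert metric on $\Omega_e := \Pi(\Omega)$. The linear map $\phi$ also induces a projective transformation $\hat{\phi}$ of the hyperplane $\{x_{n+1}=1\}$, regarded as an affine chart of $\mathbb{RP}^n$; by the very definitions of $\tilde{\phi}$ and $\hat{\phi}$ one has $\Pi \circ \tilde{\phi} = \hat{\phi} \circ \Pi$ on $\Omega$. Since projective transformations of $\mathbb{R}^n$ preserve the Euclidean cross ratio (the classical projectivity invariance recalled at the start of Section \ref{crossratio}), and since the Euclidean Hilbert metric is a logarithm of such cross ratios, $\hat{\phi}$ is an isometry $(\Omega_e, \tilde{H}) \to (\hat{\phi}(\Omega_e), \tilde{H})$.

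Concatenating the chain $(\Omega, H_\Omega) \xrightarrow{\Pi} (\Omega_e, \tilde{H}) \xrightarrow{\hat{\phi}} (\hat{\phi}(\Omega_e), \tilde{H}) \xrightarrow{\Pi^{-1}} (\tilde{\phi}(\Omega), H_{\tilde{\phi}(\Omega)})$, in which each arrow is an isometry by Corollary \ref{co:s} or by the preceding paragraph, yields that $\tilde{\phi}$ is an isometry of the Hilbert metrics, as claimed. The main obstacle is a bookkeeping one: ensuring simultaneously that both $\Omega$ and $\tilde{\phi}(\Omega)$ lie in an open hemisphere on which $\Pi$ acts as a diffeomorphism, and that no point of $\Omega_e$ gets sent by $\hat{\phi}$ to the hyperplane at infinity of $\mathbb{RP}^n$. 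This is exactly what the opening post-composition with a spherical isometry handles, and is legitimate precisely because $\Omega$ was assumed contained in an open hemisphere from the start, so that $\Omega_e$ is a bounded convex set on which the affine chart covers everything in sight.
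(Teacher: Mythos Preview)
Your proof is correct and follows essentially the same logic as the paper, just made more explicit. The paper does not give a formal proof of this theorem; it simply observes in the paragraph preceding it that elements of $\mathrm{GL}(n+1,\mathbb{R})$ send great circles to great circles (hence are projective transformations of $S^n$) and then invokes the earlier remark that, because the generalized cross ratio is a projective invariant (Corollary \ref{cr.proj}), any such projective transformation is automatically a Hilbert isometry.

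The only difference in emphasis is that the paper appeals directly to the spherical cross-ratio invariance of Corollary \ref{cr.proj}, whereas you route the argument through Corollary \ref{co:s}, conjugating by the projection $\Pi=P_s^{-1}$ to reduce to the classical fact that projective transformations of $\mathbb{R}^n$ preserve the Euclidean cross ratio. Since Corollary \ref{cr.proj} is itself proved in the paper by exactly this conjugation trick (Theorems \ref{th:s} and \ref{th:h}), your argument is really an unpacking of the paper's one-line justification. Your explicit bookkeeping---post-composing with a rotation so that $\tilde{\phi}(\Omega)\subset U^n$, and verifying the commutation $\Pi\circ\tilde{\phi}=\hat{\phi}\circ\Pi$---fills in details that the paper leaves to the reader.
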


This observation in effect says that among the three geometries of $\mathbb{R}^n$, $\mathbb{H}^n$ and $S^n$, the Hilbert geometries of convex sets on each space are mutually identifiable. In light of this, we can state the following: 

\begin{corollary}
The isometries of a Hilbert metric defined on a convex set in one of the spaces $\mathbb{R}^n, S^n,\mathbb{H}^n$  are always  realized as those induced from the isometries of $\mathbb{H}^n$.  
\end{corollary}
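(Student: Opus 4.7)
The plan is to stack the results of the previous subsection so as to reduce every case to the hyperbolic one. The perspectivities of Corollaries \ref{co:s} and \ref{co:h} provide Hilbert-isometries between the Hilbert geometry of any convex subset of $S^n$ (inside an open hemisphere) or of $\mathbb{H}^n$ and the Hilbert geometry of its projection into the Euclidean hyperplane $\{x_{n+1}=1\}\simeq\mathbb{R}^n$. Consequently, any isometry $\phi:(\Omega,H_\Omega)\to(\phi(\Omega),H_{\phi(\Omega)})$ between Hilbert metrics of convex subsets of $\mathbb{R}^n$, $S^n$ or $\mathbb{H}^n$ is, after conjugation by the appropriate $P_s^{-1}$ or $P_h^{-1}$ on the source and on the target, equivalent to an isometry $\tilde\phi:(\tilde\Omega,H_{\tilde\Omega})\to(\tilde\Omega',H_{\tilde\Omega'})$ between Hilbert metrics of two convex subsets of $\mathbb{R}^n$.

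Next, I would exploit the fact that the Euclidean homothety $x\mapsto\lambda x$ is a Hilbert isometry; choosing $\lambda>0$ small enough, one may assume both $\tilde\Omega$ and $\tilde\Omega'$ sit inside the open unit disc $\mathbb{D}^n_1\subset\{x_{n+1}=1\}$. Applying the perspectivity $P_h$ transports them to convex sets $P_h(\tilde\Omega), P_h(\tilde\Omega')\subset\mathbb{H}^n$ whose Hilbert metrics are isometric to the Euclidean ones by Corollary \ref{co:h}, so $\tilde\phi$ becomes an isometry between two Hilbert metrics in $\mathbb{H}^n$. Theorem \ref{hyp.iso} now applies and identifies this isometry with the restriction of a hyperbolic isometry of $\mathbb{H}^n$; unwinding the chain of identifications $\phi\leftrightarrow\tilde\phi\leftrightarrow P_h\circ\tilde\phi\circ P_h^{-1}$ produces, as desired, a hyperbolic isometry of $\mathbb{H}^n$ realising $\phi$ on the convex sets transported into $\mathbb{H}^n$.

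The main technical point I expect to need care is coherence: one must verify that the hyperbolic isometry produced in this way is well defined, independently of the auxiliary homothety used to fit $\tilde\Omega$ inside the unit disc, and that the various perspectivities on source and target are compatible so that a \emph{single} hyperbolic isometry is produced rather than two different ones. This reduces to the observation that two different admissible homotheties differ by a scaling, which is itself a Hilbert isometry preserved by $P_h$ (since homotheties extend to elements of $\mathrm{PGL}(n+1,\mathbb{R})$ acting on the projective models), so the corresponding hyperbolic isometries differ only by composition with an inner Hilbert automorphism of the image; this does not affect the conclusion that $\phi$ is induced by an isometry of $\mathbb{H}^n$.
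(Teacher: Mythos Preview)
Your reduction via the perspectivities $P_s,P_h$ and the homothety trick is exactly the mechanism the paper has in mind; the paper itself offers no proof beyond the preceding sentence (``the Hilbert geometries of convex sets on each space are mutually identifiable''), so you are really supplying the missing argument rather than reproducing one.

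There is, however, a genuine gap at the final step. You invoke Theorem~\ref{hyp.iso} to conclude that the transported map $P_h\circ\tilde\phi\circ P_h^{-1}:P_h(\tilde\Omega)\to P_h(\tilde\Omega')$ is the restriction of a hyperbolic isometry. But Theorem~\ref{hyp.iso}, as stated and proved in the paper, concerns \emph{projective transformations of $\mathbb{H}^n$}---globally defined diffeomorphisms sending hyperbolic geodesics to hyperbolic geodesics---and asserts that those coincide with the hyperbolic isometries. It does not assert, and the paper does not prove, that an arbitrary Hilbert isometry between two convex subsets of $\mathbb{H}^n$ extends to a hyperbolic isometry. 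Indeed that stronger assertion is false: any two triangles $T_1,T_2\subset\mathbb{H}^2$ have isometric Hilbert metrics (pull back by $P_h^{-1}$ to Euclidean triangles, which are all projectively equivalent, hence Hilbert-isometric to the same hexagonal normed plane), yet if $T_1$ and $T_2$ have different areas no hyperbolic isometry carries one to the other.

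The corollary is therefore only tenable in the restricted sense signalled by the section heading ``Projective transformations as isometries of Hilbert metrics'': the isometries under discussion are those \emph{induced by ambient projective transformations}. For those, your chain of identifications works, because a projective transformation of $\mathbb{R}^n$ (or of $S^n$) fixing the relevant convex set transports, via $P_h$, to a projective transformation of $\mathbb{H}^n$, to which Theorem~\ref{hyp.iso} legitimately applies. Your write-up should make this restriction explicit; as it stands, the phrase ``any isometry $\phi$'' at the outset and the appeal to Theorem~\ref{hyp.iso} for an arbitrary $\tilde\phi$ overreach what the theorem delivers.
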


  \section*{{\bf Appendix}\\ Classical proofs of some results in projective geometry \\ on $\mathbb{H}^n$ and $S^n$}\label{cross-ratio}
  
\subsection{Projective invariance of cross ratio}

  
  We recall the following well-known formulae for the purpose of highlighting the analogy (at the formal level) between the three geometries, and because we shall use these formulae below in the invariance of the cross ratio. 
 
  \begin{proposition}[Sine Rule]
  Given a triangle $ABC$ with sides $a,b,c$ opposite to the angles $A,B,C$ respectively, we have, in the case where the triangle is Euclidean:
\[\frac{a}{\sin A} =\frac{b}{\sin B}= \frac{c}{\sin C},\] 
in the case where the triangle is spherical:
\[\frac{\sin a}{\sin A} =\frac{\sin b}{\sin B}= \frac{\sin c}{\sin C},\] 
and in the case where the triangle is hyperbolic:
\[\frac{\sinh a}{\sin A} =\frac{\sinh b}{\sin B}= \frac{\sinh c}{\sin C}.\] 

\end{proposition}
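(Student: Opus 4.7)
The plan is to give a unified proof for all three geometries by dropping an altitude and applying the right-triangle formula $\sinh b = \sinh c \sin\beta$ (hyperbolic), $\sin b = \sin c \sin\beta$ (spherical), or $b = c\sin\beta$ (Euclidean) already recorded in the body of the paper. These three right-triangle identities are the only trigonometric input I would use, so the argument is the same in each case up to replacing the weighting function ($\mathrm{id}$, $\sin$, or $\sinh$) on side lengths.

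First, given a triangle $ABC$ with the customary labelling, I would drop the perpendicular from $C$ to the line carrying the side $AB$, calling its foot $H$ and its length $h$. This decomposes $ABC$ into two right triangles $AHC$ and $BHC$, each with the right angle at $H$ and with hypotenuses $b = CA$ and $a = CB$ respectively. Applying the right-triangle rule from the body of the paper to each of these two right triangles, expressing the opposite side $h$ in terms of the hypotenuse and the angle at $A$ or at $B$, gives
\begin{equation*}
f(h) = f(b)\sin A = f(a)\sin B,
\end{equation*}
where $f$ denotes $\mathrm{id}$, $\sin$, or $\sinh$ according to the geometry. Since $h>0$ the factor $f(h)$ is nonzero and can be cancelled after rearranging, which produces the identity $f(a)/\sin A = f(b)/\sin B$. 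Applying the same construction with the altitude dropped from a different vertex (say $A$ onto $BC$) yields the third equal ratio $f(c)/\sin C$, completing the Sine Rule in each of the three geometries.

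The main (and really only) obstacle is to handle the configurations in which the foot $H$ of the altitude falls outside the segment $AB$, i.e.\ when one of the angles $A$ or $B$ is obtuse. I would address this by working with the supporting geodesic line through $A$ and $B$ rather than the segment itself: the right triangles $AHC$ and $BHC$ are then still well defined, and the right-triangle identity still applies, so the computation goes through unchanged (only the additive decomposition of $c$ into $AH + HB$ is no longer true, but that decomposition is never used in the argument). In the spherical case a further mild check is needed to ensure that the altitude $h$ and all relevant sides lie in the range $(0,\pi)$ where $\sin$ is positive and the right-triangle formula is valid; this is automatic once one assumes, as is standard, that $ABC$ is a proper (nondegenerate, contained in an open hemisphere) spherical triangle.
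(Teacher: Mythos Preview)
Your argument is correct and is in fact the classical proof of the Sine Rule via an altitude. One small point worth making explicit in the obtuse case: when, say, the angle at $A$ is obtuse and $H$ lies outside the segment $AB$, the interior angle of the right triangle $AHC$ at the vertex $A$ is $\pi - A$ rather than $A$; the identity $f(h)=f(b)\sin A$ survives because $\sin(\pi-A)=\sin A$. You allude to this by saying ``the computation goes through unchanged,'' but stating the reason costs only one line and preempts any objection.

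As for comparison with the paper: the paper does not actually prove the Sine Rule. It simply records the three formulae and refers the reader to \cite{ACP} for a model-free proof, remarking that the same argument covers the spherical case. Your proposal therefore supplies more than the paper does, and it has the virtue of deriving the general Sine Rule from the special right-angle instance $\sinh b=\sinh c\sin\beta$ (and its spherical and Euclidean analogues) that the paper has already stated and used in \S\ref{s:Funk-H}. This keeps the exposition self-contained.
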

For the proof, and for other trigonometric formulae in hyperbolic trigonometry we refer the reader to \cite{ACP} where the proofs are given in a model-free setting. In such a setting the proofs work as well in the spherical geometry case.

 We now present a few results in the setting of hyperbolic geometry. The proofs are based on the Sine Rule and they mimic the proofs of corresponding results in Euclidean geometry. From our point of view, the fact that is most interesting is the formal analogy and the results and the formulas in the three cases : Euclidean, hyperbolic and spherical.

\begin{proposition} \label{prop11} Let $ABC$ be a hyperbolic triangle. We join $A$ by a geodesic to a point $D$ on the line $BC$. Then we have 
\[\frac{\sinh DC}{\sinh BD}=\frac{\sin \widehat{DAC}}{\sin  \widehat{BAD}}\cdot \frac{\sin B}{\sin C}.\]
\end{proposition}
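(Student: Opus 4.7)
The plan is to mimic exactly the classical Euclidean argument by applying the hyperbolic Sine Rule to each of the two sub-triangles $ABD$ and $ACD$ cut out by the cevian $AD$, and then divide the two relations. The only slight care needed is to match up correctly the angles at $B$, $C$, and at the vertex $A$ after the subdivision: the angle of $\triangle ABD$ at $B$ coincides with the angle $B$ of $\triangle ABC$ (since $D$ lies on the geodesic through $B$ and $C$), and similarly the angle of $\triangle ACD$ at $C$ coincides with $C$; the angles at $A$ in the two sub-triangles are by definition $\widehat{BAD}$ and $\widehat{DAC}$.

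First I would apply the Sine Rule to $\triangle ABD$, which has side $BD$ opposite to the angle $\widehat{BAD}$ and side $AD$ opposite to the angle $B$. This yields
\[
\frac{\sinh BD}{\sin \widehat{BAD}} \;=\; \frac{\sinh AD}{\sin B},
\]
hence $\sinh BD = \sinh AD \cdot \sin \widehat{BAD}/\sin B$.

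Next I would apply the Sine Rule to $\triangle ACD$, whose side $DC$ is opposite to $\widehat{DAC}$ while $AD$ is opposite to $C$. This gives
\[
\frac{\sinh DC}{\sin \widehat{DAC}} \;=\; \frac{\sinh AD}{\sin C},
\]
and therefore $\sinh DC = \sinh AD \cdot \sin \widehat{DAC}/\sin C$.

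Finally, dividing the expression for $\sinh DC$ by the expression for $\sinh BD$ makes the common factor $\sinh AD$ cancel, and yields precisely
\[
\frac{\sinh DC}{\sinh BD} \;=\; \frac{\sin \widehat{DAC}}{\sin \widehat{BAD}}\cdot \frac{\sin B}{\sin C},
\]
which is the claim. There is essentially no obstacle here beyond the bookkeeping of angles; the content is entirely encoded in the Sine Rule recalled just above, and the argument is formally identical to the Euclidean one, with lengths replaced by their hyperbolic sines, which is exactly the analogy the paper is emphasizing.
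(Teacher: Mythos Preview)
Your proof is correct and is essentially the same as the paper's: both apply the hyperbolic Sine Rule to the two sub-triangles $ABD$ and $ACD$ and then combine the resulting relations to eliminate $\sinh AD$. The paper additionally remarks that the argument is independent of whether $D$ lies between $B$ and $C$ or outside that segment (in the latter case one of the relevant angles at $B$ or $C$ is replaced by its supplement, which has the same sine), a point you implicitly use but do not spell out.
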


 \begin{figure}[!hbp]   
 \centering   
 \psfrag{B}{\small $B$} \psfrag{A}{\small $A$} 
 \psfrag{C}{\small $C$} \psfrag{D}{\small $D$} 
 \includegraphics[width=0.35\linewidth]{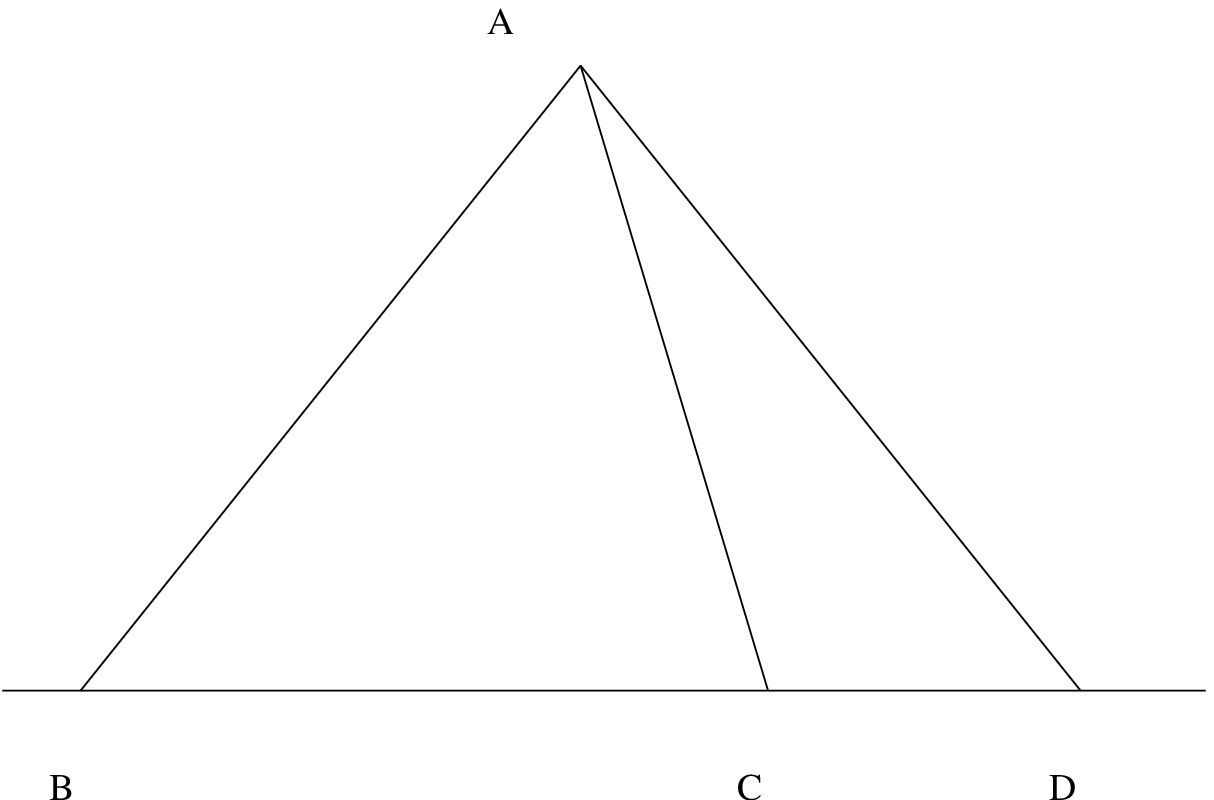}   
 \caption{\small {}}     
 \label{Funk2}    
 \end{figure}

\begin{proof} There proof is independent of whether the point $D$ is between $B$ and $C$, or $C$ is between $D$ and $B$, or $B$ is between $C$ and $D$. 

Applying the sine rule in the triangle $DAC$, we have
\[\frac{\sinh DC}{\sinh AD} = \frac{\sin \widehat{CAD}}{\sin C}.\]
Applying the sine rule in the triangle $BAD$, we have
\[\frac{\sinh AD}{\sinh BD} = \frac{\sin B}{\sin \widehat{BAD}}.\]
From the last two equations, we obtain 
\[\frac{\sinh DC}{\sinh BD} = \frac{\sin \widehat{DAC}}{\sin \widehat{BAD}}\cdot  \frac{\sin B} {\sin C}.\]
This proves Proposition \ref{prop11}.
\end{proof}

\begin{proposition} \label{prop2}
Consider four ordered distinct geodesic lines $l_1,l_2,l_3,l_4$ intersecting at a point $A$ and let $l$ be a geodesic line intersecting $l_1,l_2,l_3,l_4$ at points $A_1, A_2, A_3, A_4$ respectively. Then we have
\[\frac{\sinh A_2 A_4}{\sinh A_3A_4}\cdot \frac{\sinh A_3A_1}{\sinh A_2A_1}= \frac{\sin \widehat{A_2AA_4}}{\sin \widehat{A_3AA_4}}\cdot \frac{\sin \widehat{A_3AA_1}}{\sin \widehat{A_2AA_1}}.
\]
\end{proposition}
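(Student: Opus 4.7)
The plan is to apply Proposition \ref{prop11} twice, each time to the same auxiliary hyperbolic triangle $AA_2A_3$ whose base sits along the transversal $l$, viewing first $A_4$ and then $A_1$ as the foot on the side $A_2A_3$ of a cevian issued from the apex $A$. Each application produces a ratio of the form $\sinh/\sinh$ of subsegments of $l$ expressed as the product of a ratio of sines of angles at $A$ with a ratio of sines of the two base angles $\widehat{AA_2A_3}$ and $\widehat{AA_3A_2}$ of the auxiliary triangle.

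Concretely, I would first take $B=A_2$, $C=A_3$, $D=A_4$ in the notation of Proposition \ref{prop11} and invert the resulting identity to compute $\sinh A_2A_4/\sinh A_3A_4$. Then I would take $B=A_3$, $C=A_2$, $D=A_1$ and invert to compute $\sinh A_3A_1/\sinh A_2A_1$. When the two ratios are multiplied, the two base-angle factors $\sin\widehat{AA_2A_3}$ and $\sin\widehat{AA_3A_2}$ coming from the first application appear in reversed positions in the second (because swapping $B$ and $C$ swaps $\sin B$ and $\sin C$), so they cancel in pairs. What is left is precisely the right-hand side of the proposition.

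The whole content of the argument is this cancellation; it is also the geometric reason underlying projective invariance of the hyperbolic cross ratio, since the surviving expression depends only on the pencil $l_1,l_2,l_3,l_4$ at the vertex $A$ and not on the transversal $l$. I do not anticipate any real obstacle. A minor point worth a sentence in the actual write-up is that Proposition \ref{prop11} remains valid when the cevian foot lies outside the segment $BC$ (as already recorded there), so that the present argument is insensitive to the relative order of $A_1,A_2,A_3,A_4$ along $l$; and of course the same scheme, with $\sinh$ replaced by $\sin$, will yield the spherical version once the analogous Sine Rule is invoked.
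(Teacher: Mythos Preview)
Your proposal is correct and is essentially identical to the paper's own proof: the paper applies Proposition~\ref{prop11} twice to the triangle $AA_2A_3$ (with $D=A_4$ and then $D=A_1$), multiplies the resulting identities, and observes that the factors $\sin A_2$ and $\sin A_3$ cancel. Your remarks about insensitivity to the ordering of the $A_i$ and about the spherical analogue also mirror what the paper records.
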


\begin{figure}[!hbp]   \centering     \psfrag{A}{\small $A_1$} \psfrag{C}{\small $A_3$} \psfrag{D}{\small $A_4$} \psfrag{B}{\small $A_2$} \psfrag{l}{\small $l_1$} \psfrag{m}{\small $l_2$} \psfrag{n}{\small $l_3$}  \psfrag{o}{\small $l_4$} \includegraphics[width=0.45\linewidth]{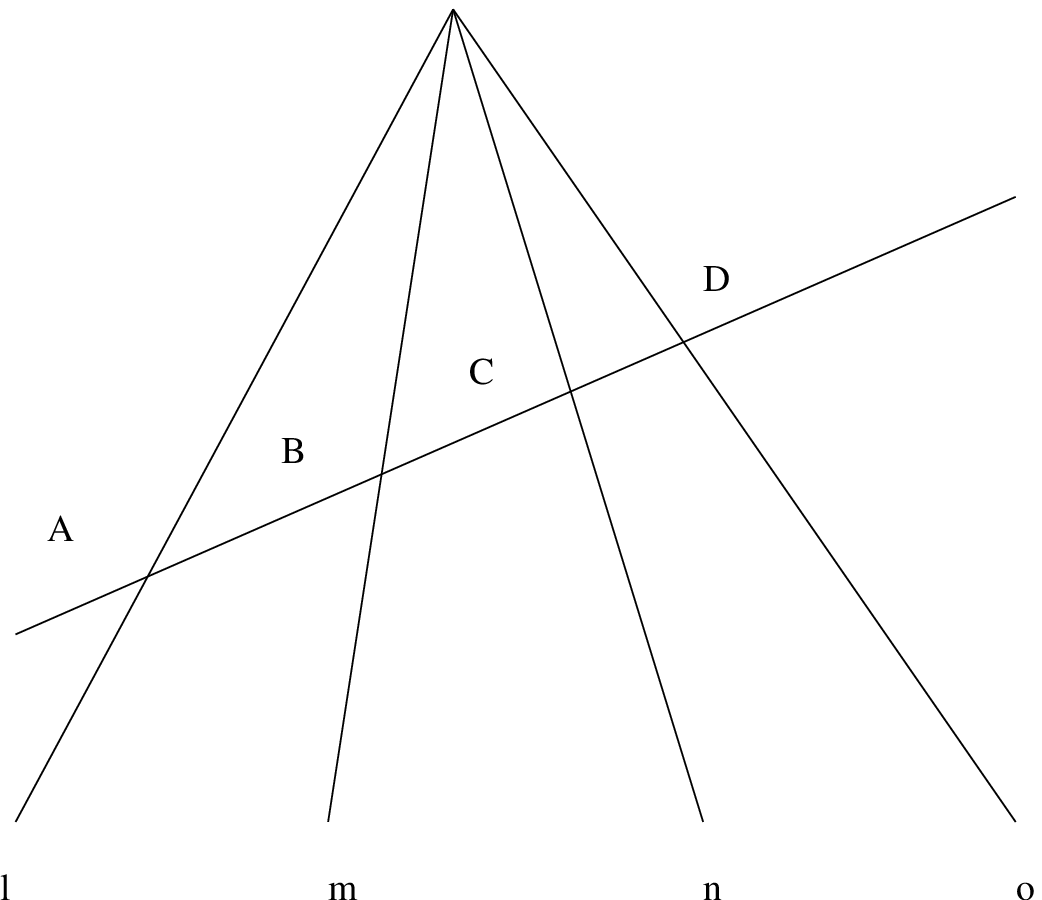}      \caption{\small {}}     \label{Funk3}       \end{figure}

\begin{proof}  We refer to Figure \ref{Funk3}.
Using Proposition \ref{prop11}, we have
\[\frac{\sinh A_2A_4}{\sinh A_3A_4}= \frac{\sin \widehat{A_2AA_4}}{\sin \widehat{A_3AA_4}}\cdot \frac{\sin A_3}{\sin A_2}.
\]
Using again Proposition \ref{prop11}, we have
\[
\frac{\sinh A_3A_1}{\sinh A_2A_1}= \frac{\sin \widehat{A_3AA_1}}{\sin \widehat{A_2AA_1}}\cdot \frac{\sin A_2}{\sin A_3}.
\]
This implies 
\[\frac{\sinh A_2A_4}{\sinh A_3A_4} \cdot \frac{\sinh A_3A_1}{\sinh A_2A_1} = \frac{\sin \widehat{A_2AA_4}}{\sin \widehat{A_3AA_4}} \cdot \frac{\sin \widehat{A_3AA_1}}{\sin \widehat{A_2AA_1}}.
\]
This proves Proposition \ref{prop2}.
\end{proof}

\begin{corollary}[Cross ratio invariance in hyperbolic geometry] \label{cor:cross-inv}
Consider four  ordered distinct geodesics $l_1, l_2, l_3, l_4$ in the hyperbolic plane that are concurrent at a point $A$ and let $l$   be a  geodesic that intersects these four lines at points $A_1, A_2, A_3, A_4$ respectively. Then the cross ratio of the ordered quadruple $A_1, A_2, A_3, A_4$ does not depend on the choice of the line $l'$. 
\end{corollary}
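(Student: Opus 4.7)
The plan is to read Proposition \ref{prop2} as essentially already containing the corollary, with the conceptual content being that the right-hand side of that identity depends only on the pencil of concurrent geodesics at $A$ and not on the particular transversal. Concretely, the expression
\[
\frac{\sin \widehat{A_2AA_4}}{\sin \widehat{A_3AA_4}}\cdot \frac{\sin \widehat{A_3AA_1}}{\sin \widehat{A_2AA_1}}
\]
involves only angles at $A$ between the lines $l_1,l_2,l_3,l_4$, independently of where the points $A_i$ are located along these lines.

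To make the independence precise, I would apply Proposition \ref{prop2} twice: once to the transversal $l$ producing points $A_1,A_2,A_3,A_4$, and once to a second transversal $l'$ producing points $A'_1,A'_2,A'_3,A'_4$ on $l_1,l_2,l_3,l_4$. Each application yields an equality between a cross-ratio-like $\sinh$ expression and an angular $\sin$ expression of the type above. I would then compare the two angular expressions termwise. For each pair $i\neq j$, the angle $\widehat{A_iAA_j}$ is formed by the ray of $l_i$ through $A_i$ and the ray of $l_j$ through $A_j$; the angle $\widehat{A'_iAA'_j}$ is formed by rays along the same two geodesics $l_i$ and $l_j$, but possibly on the opposite side of $A$. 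In either case the two angles are either equal or supplementary.

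The key identity $\sin(\pi-\theta)=\sin\theta$ then shows that each sine is unchanged when we pass from $A_i,A_j$ to $A'_i,A'_j$, so the two angular expressions coincide. Hence the two $\sinh$ cross-ratios also coincide, which is exactly the statement that the hyperbolic cross ratio $[A_2,A_3,A_4,A_1]_h$ is independent of the transversal $l$.

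The main, and really the only, obstacle is the supplementary-angle bookkeeping in the previous paragraph, since one must verify that no sign-like ambiguity survives. This is handled by the evenness of $\sin$ about $\pi/2$ and by the symmetric role of each angle in the product. Everything else is a direct invocation of Proposition \ref{prop2}, which itself rests on the hyperbolic Sine Rule via Proposition \ref{prop11}; no further geometric input is needed, and the identical argument (with $\sinh$ replaced by $\sin$) would establish the spherical analogue, recovering Menelaus's classical observation.
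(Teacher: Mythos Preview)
Your argument is correct and is essentially the same as the paper's: both deduce the corollary directly from Proposition~\ref{prop2} by observing that the right-hand side depends only on the angles at $A$ formed by the pencil $l_1,l_2,l_3,l_4$, hence is independent of the transversal. Your added remark about supplementary angles and the identity $\sin(\pi-\theta)=\sin\theta$ makes explicit a point the paper leaves implicit, but the approach is the same.
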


\begin{proof}
By Proposition \ref{prop2}, the cross ratio of the quadruple $A_1, A_2, A_3, A_4$ depends only on the angles that are made by the lines $l_1, l_2, l_3, l_4$.
\end{proof}
The result of Corollary \ref{cor:cross-inv} is a hyperbolic analogue of the result saying that the Eucidean cross ratio is a projective invariant.
We now present the theorem of Menelaus in hyperbolic  geometry which is used in section \ref{s:Funk-H}, with a proof based on the Sine Rule.\footnote{Menelaus (2nd century A.D.) in his \emph{Sphaerica} gave the theorem in the Euclidean and in the spherical cases. There are several proofs of that theorem, and the proof that we give is certainly not the original proof given by Menelaus, since he did not have the Sine Rule at his disposal. In fact, Menelaus did not formulate his theorem in terms of sines, but of chords.  We recall by the way the notion of sine was introduced by the Arabs in the ninth century, who also discovered the Sine Rule, in Euclidean and in spherical geometry. Let us also note that a modern formulation of the theorem involves algebraic ratios and not just distances, and in this setting the right hand side of the equality given in Proposition \ref{prop:M} is $-1$ instead of $1$.} 

\begin{figure}[!hbp] \centering \psfrag{B}{\small $B$} \psfrag{A}{\small $A$} \psfrag{C}{\small $C$} \psfrag{D}{\small $A'$} \psfrag{E}{\small $C'$} \psfrag{F}{\small $B'$} \includegraphics[width=0.65\linewidth]{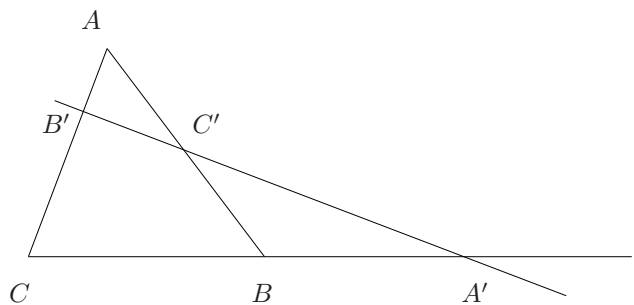}    \caption{\small {The theorem of Menelaus says that three points $A',B',C'$ on the lines containing the sides $BC, AC, AB$ of a triangle are aligned if and only they satisfy the relation  in Proposition \ref{prop:M}.}}   \label{Funk4}  \end{figure}

\begin{proposition}[Menelaus' Theorem in the hyperbolic plane] \label{prop:M} Let $ABC$ be a triangle in the hyperbolic plane ad let $A',B',C'$ be three points on the lines containing the sides $BC, AC, AB$. Then, the points $A',B',C'$ are aligned if and only if we have the relation
\[\frac{\sinh AC'}{\sinh AB'} \cdot \frac{\sinh BA'}{\sinh BC'}\cdot \frac{\sinh CB'}{\sinh A'C}= 1\]
\end{proposition}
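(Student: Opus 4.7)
The approach mirrors the classical Sine Rule proof of Menelaus' theorem in Euclidean geometry, adapted to the hyperbolic setting where ratios of side lengths are replaced by ratios of their hyperbolic sines. The forward implication reduces to a direct Sine Rule computation; the converse follows by a monotonicity uniqueness argument.

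For the ``only if'' direction, suppose $A', B', C'$ lie on a common geodesic $\ell$. I would apply the hyperbolic Sine Rule in each of the three sub-triangles $AB'C'$, $BC'A'$, and $CA'B'$, that is, the triangle formed by each vertex of $ABC$ together with the two transversal points lying on the two sides through that vertex. Each application expresses one of the three ratios $\sinh AC'/\sinh AB'$, $\sinh BA'/\sinh BC'$, $\sinh CB'/\sinh CA'$ as a ratio of sines of two angles, one at each of the two transversal points of that sub-triangle. Multiplying the three resulting identities gives six sine factors, grouped into a pair at each of $A'$, $B'$, $C'$. At $B'$, for instance, the two angles that appear are $\widehat{AB'C'}$ and $\widehat{CB'A'}$; their sides consist of the rays $B'A$ and $B'C$ (which are opposite, since $B'$ lies on line $AC$) together with the rays $B'C'$ and $B'A'$ (which are either identical or opposite, since both $C'$ and $A'$ lie on $\ell$). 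Hence these two angles are either vertical or supplementary, and their sines coincide. The same cancellation occurs at $A'$ and at $C'$, so the product of the three $\sinh$ ratios reduces to $1$.

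For the ``if'' direction, assume the stated product equals $1$, and let $A''$ be the intersection of the geodesic through $B'$ and $C'$ with the geodesic through $B$ and $C$. Applying the already-proved direction to the collinear triple $A'', B', C'$ yields the same relation with $A''$ in place of $A'$; dividing the two relations shows $\sinh BA' / \sinh CA' = \sinh BA'' / \sinh CA''$. Since the function $X \mapsto \sinh BX / \sinh CX$ is strictly monotonic on each connected component of the geodesic $BC$ with $\{B,C\}$ removed, this pins down $A' = A''$ provided both lie on the same component.

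The principal obstacle is the case analysis forced by the unsigned formulation (right-hand side $+1$ rather than $-1$, as the footnote highlights). The forward direction handles this cleanly, since the cancellation $\sin \widehat{AB'C'} = \sin \widehat{CB'A'}$ is insensitive to whether the two angles are vertical or supplementary. For the converse, one must verify that $A''$ and $A'$ lie on the same component of the geodesic through $B$ and $C$ with $\{B,C\}$ removed; this is exactly the parity condition implicit in writing the right-hand side as $+1$, namely that the number of transversal points lying outside the corresponding closed sides of the triangle is even. Once this bookkeeping is done, the remainder is a direct transcription of the Euclidean Sine Rule proof, using only the hyperbolic Sine Rule stated earlier in the appendix.
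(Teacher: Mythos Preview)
Your ``only if'' direction is exactly the paper's proof: the same three sub-triangles $AB'C'$, $BC'A'$, $CA'B'$, the same Sine Rule applications, and the same cancellation via vertical/supplementary angles at each of $A',B',C'$. The paper in fact proves only this direction, explicitly saying so.

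Your sketch of the converse goes beyond the paper, which omits it entirely. One point to tighten: you assume the geodesic through $B'$ and $C'$ meets the geodesic $BC$ at some $A''$, but in the hyperbolic plane two lines need not intersect, so this requires justification (or a separate treatment of the non-intersecting case). Apart from that and the component-matching bookkeeping you already flag, the argument is sound.
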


\begin{proof} The proof is analogous to the proof of this theorem in the Euclidean case. We just consider prove here the ``only if" direction in order to show how one can use the hyperbolic sine rule instead of the Euclidean one. Needless to say, the proof works as well in the spherical case, with $\sinh$ replaced by $\sin$.

We consider the case of Figure \ref{Funk4}.

In the triangle $AB'C'$, the Sine Rule gives: 
\[
\frac{\sin \widehat{AB'C'}}{\sin \widehat{AC'B'}}=\frac{\sinh AC'}{\sinh AB'}.\]
In the triangle $BC'A'$, the Sine Rule gives: 
\[
\frac{\sin \widehat{BC'A'}}{\sin \widehat{BA'C'}}=\frac{\sinh BA'}{\sinh BC'}.\]
In the triangle $CA'B'$, the Sine Rule gives: 
\[
\frac{\sin \widehat{CA'B'}}{\sin \widehat{A'B'C}}=\frac{\sinh CB'}{\sinh A'C}.\]

Multiplying the three sides of the last three equations and using the fact that
$\sin \widehat{AB'C'}=\sin \widehat{A'B'C}$ and $\sin \widehat{BC'A'}=\sin \widehat{AC'B'}$, we obtain
\[1=\frac{\sinh AC'}{\sinh AB'} \cdot \frac{\sinh BA'}{\sinh BC'}\cdot \frac{\sinh CB'}{\sinh A'C},\]
which is the desired equality.

\end{proof}

The spherical case of all these results can be done in a similar way; the function $\sinh$ should be replaced in this case by the function $\sin$.

\subsection{An alternative proof of the triangle inequality for $F(x, y)$} 

The proof uses a drawing (Figure \ref{Funk5}) and Menelaus' Theorem (cf. Proposition \ref{prop:M}). We present it here, using the notation of \cite{Zaustinsky}.

Let $x,y,z$ be three points in $\Omega$. In the case where the three points are collinear (that is,  if they are on a common hyperbolic geodesic), we know that the triangle inequality is satisfied, and that it is an equality, that is, we have $F(x,y)+ F(y,z)= F(x,z)$ if $x,y,z$ lie in that order on the line. 
Assume now the three points are not collinear and let $a,b,c,d,e,f$ the intersections with $\partial \Omega$ of the lines $xz$, $yx$ and $zy$, with the orders of the various quadruples of collinear points represented in Figure \ref{Funk5}. 

\begin{figure}[!hbp]  \centering    \psfrag{B}{\small $b'$} \psfrag{A}{\small $a'$} \psfrag{a}{\small $a$} \psfrag{b}{\small $b$} \psfrag{c}{\small $c$} \psfrag{d}{\small $d$} \psfrag{e}{\small $e$} \psfrag{f}{\small $f$} \psfrag{p}{\small $p$} \psfrag{g}{\small $g$} \psfrag{x}{\small $x$} \psfrag{y}{\small $y$}\psfrag{z}{\small $z$} \includegraphics[width=0.35\linewidth]   {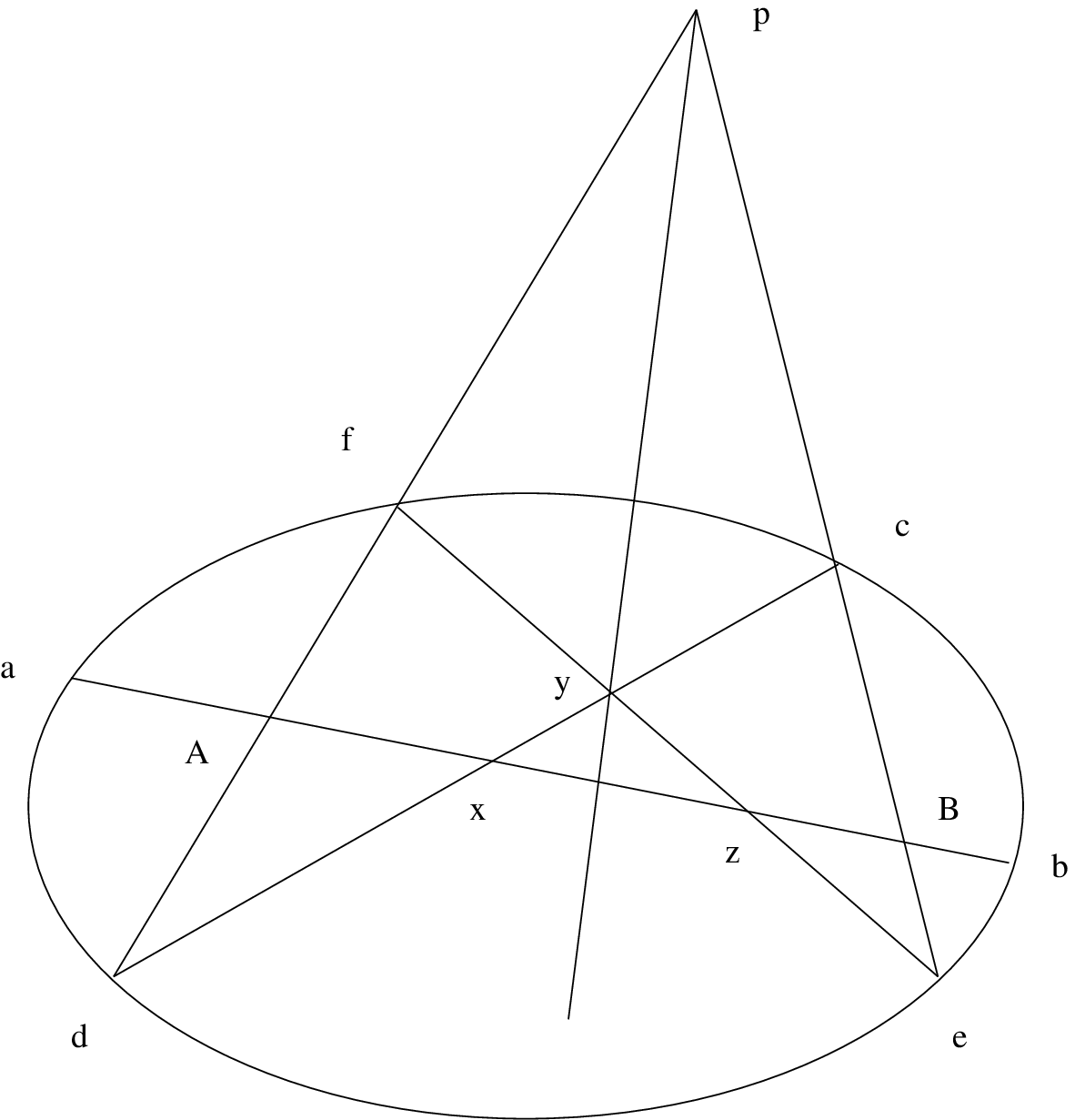} \caption{\small {}}     \label{Funk5}    \end{figure}

By the cross ratio invariance in hyperbolic geometry (Proposition \ref{cor:cross-inv}), we have 
\[\frac{\sinh xc}{\sinh yc} \cdot \frac{\sinh dy}{\sinh dx} =\frac{\sinh xb'}{\sinh gb'} \cdot \frac{\sinh a'g}{\sinh a'x}\]

and 

\[\frac{\sinh ye}{\sinh ze} \cdot \frac{\sinh fz}{\sinh fy} =\frac{\sinh gb'}{\sinh zb'} \cdot \frac{\sinh a'z}{\sinh a'g}.\]

Multiplying both sides of these two equations, we get

\[\frac{\sinh xc}{\sinh yc} \cdot \frac{\sinh ye}{\sinh ze} =\frac{\sinh xb'}{\sinh zb'} \cdot \frac{\sinh a'z}{\sinh a'x}\cdot \frac{\sinh dx}{\sinh dy} \cdot \frac{\sinh fy}{\sinh fz}.\]
By the Theorem of Menelaus (Theorem \ref{prop:M}) applied to the triangle $fa'z$, we have
\[\frac{\sinh dx}{\sinh dy} \cdot \frac{\sinh fy}{\sinh fz} =\frac{\sinh a'x}{\sinh a'z}.\]
This gives
\[\frac{\sinh xc}{\sinh yc} \cdot \frac{\sinh ye}{\sinh ze} =\frac{\sinh xb'}{\sinh zb'}\geq \frac{\sinh xb}{\sinh zb},\]
and the inequality is strict unless $b=b'$.
From this we see that the inequality is strict for all $x,y,z$ unless $\partial \Omega$ contains a hyperbolic segment. This gives the desired result.

\end{document}